\newtheorem{theorem}{Theorem}[section]
\newtheorem{lemma}{Lemma}[section]
\newtheorem{proposition}{Proposition}[section]
\newtheorem{remark}{Remark}[section]
\newcommand{\bal}{\begin{align}}
\newcommand{\bbal}{\begin{align*}}
\newcommand{\beq}{\begin{equation}}
\newcommand{\eeq}{\end{equation}}
\newcommand{\bca}{\begin{cases}}
\newcommand{\eca}{\end{cases}}
\newcommand{\pa}{\partial}
\newcommand{\fr}{\frac}
\newcommand{\dd}{\mathrm{d}}
\newcommand{\R}{\mathbb{R}}
\newcommand{\les}{\lesssim}
\newcommand{\f}{\left}
\newcommand{\g}{\right}
\begin{document}

\title{Non-uniform dependence on initial data for the generalized Camassa-Holm  equation in $C^1$}

\author{Yanghai Yu\footnote{E-mail: yuyanghai214@sina.com; lf191110@163.com}\; and Fang Liu\\
\small School of Mathematics and Statistics, Anhui Normal University, Wuhu 241002, China}

\date{\today}

\maketitle\noindent{\hrulefill}

{\bf Abstract:} It is shown in \cite[Adv. Differ. Equ(2017)]{HT} that the Cauchy problem for the generalized Camassa-Holm equation is well-posed in $C^1$ and the data-to-solution map is H\"{o}lder continuous from $C^\alpha$ to $\mathcal{C}([0,T];C^\alpha)$ with $\alpha\in[0,1)$. In this paper, we further show that the data-to-solution map of the generalized Camassa-Holm equation is not uniformly continuous on the initial data in $C^1$. In particular, our result also can be a complement of previous work on the classical Camassa-Holm equation in \cite[Geom. Funct. Anal(2002)]{G02}.

{\bf Keywords:} generalized Camassa-Holm equation, Classical solution; Non-uniform dependence

{\bf MSC (2010):} 35Q51, 35B30.
\vskip0mm\noindent{\hrulefill}

\section{Introduction}

In this paper, we consider the Cauchy problem for the generalized Camassa-Holm (g-kbCH) equation as follows
\begin{eqnarray}\label{kb}
        \left\{\begin{array}{ll}
  m_t+u^km_x+bu^{k-1}u_xm=0,\\
  m=u-u_{xx},\\
  u(0, x)=u_0(x), \end{array}\right.
        \end{eqnarray}
where $(x,t)\in \R\times\R^+$ and $(k,b)\in \mathbb{Z}^+\times \R$.
The g-kbCH equation \eqref{kb} can be transformed equivalently into the nonlinear transport type equation
\begin{eqnarray}\label{gch0}
        \left\{\begin{array}{ll}
         u_t+u^ku_x=\mathbf{P}(u),\\
         u(0, x)=u_0(x), \end{array}\right.
        \end{eqnarray}
where
\begin{align}\label{gch1}
\mathbf{P}(u)&=-\pa_x(1-\pa^2_x)^{-1}\left(\frac{b}{k+1} u^{k+1}+\frac{3 k-b}{2} u^{k-1} u_{x}^{2}\right)\nonumber\\
&\quad-(1-\pa^2_x)^{-1}\left(\frac{(k-1)(b-k)}{2}u^{k-2}u_x^3\right).
\end{align}

When $k=1$ and $b=2$, the g-kbCH equation  \eqref{kb} is the famous Camassa-Holm (CH) equation
\begin{equation}\label{CH}
\begin{cases}
\partial_tu+u\pa_xu=\mathbf{P}(u)=-\pa_x(1-\pa^2_x)^{-1}\left(u^2+\fr12(\pa_xu)^2\right), \\
u(x,t=0)=u_0(x),
\end{cases}
\end{equation}
which was originally derived as a bi-Hamiltonian system by Fokas and Fuchssteiner \cite{Fokas1981} in the context of the KdV model and gained prominence after Camassa-Holm \cite{Camassa1993} independently re-derived
it from the Euler equations of hydrodynamics using asymptotic expansions. The CH equation is completely integrable \cite{Camassa1993} with a bi-Hamiltonian structure \cite{Constantin-E,Fokas1981} and infinitely many conservation laws \cite{Camassa1993,Fokas1981}. Also, it admits exact peaked soliton solutions (peakons) of the form $ce^{-|x-ct|}$ with $c>0$, which are orbitally stable \cite{Constantin.Strauss} and models wave breaking (i.e., the solution remains bounded, while its slope becomes unbounded in finite time \cite{Constantin,Escher2,Escher3}).

When $k=1$ and $b=3$, the g-kbCH equation \eqref{kb} becomes the classical Degasperis-Procesi (DP) equation.
The DP equation with a bi-Hamiltonian structure is integrable \cite{DP} and has traveling wave solutions \cite{Lenells,Vakhnenko}. Although the DP equation is similar to the CH equation in several aspects, these two equations are truly
different. One of the novel features of the DP different from the CH equation is that, it has not only peakon solutions \cite{DP} and periodic peakon solutions \cite{YinJFA}, but also shock peakons \cite{Lundmark2007} and the periodic shock waves \cite{Escher}.

When $k=2$ and $b=3$, the g-kbCH equation  \eqref{kb} reduces to the well known Novikov equation \cite{H-H,Ni2011,Novikov2009}
\begin{equation}\label{N}
\begin{cases}
u_t+u^2\pa_xu=-\pa_x(1-\pa^2_x)^{-1}\left(\frac32u(\pa_xu)^2+u^3\right)-\frac12(1-\pa^2_x)^{-1}(\pa_xu)^3,\\
u(x,t=0)=u_0(x).
\end{cases}
\end{equation}
Home-Wang \cite{Home2008} proved that the Novikov equation  with cubic nonlinearity shares similar properties with the CH equation, such as a Lax pair in matrix form, a bi-Hamiltonian structure, infinitely many conserved quantities and peakon solutions given by $u(x, t)=\sqrt{c}e^{-|x-ct|}$.

We should emphasize that, when $b=k+1$, the g-kbCH equation  \eqref{kb} reduces to the following generalized Camassa-Holm-Novikov (gCHN) equation which was proposed by Anco, Silva and Freire \cite{Anco2015}
\begin{eqnarray}\label{eq1}
        \left\{\begin{array}{ll}
         m_t+u^km_x+(k+1)u^{k-1}u_xm=0,\\
         u(0, x)=u_0(x). \end{array}\right.
        \end{eqnarray}
The gCHN equation \eqref{eq1} is an evolution equation with (k+1)-order nonlinearities which can be regarded as a subclass of the g-kbCH equation and, as shown in \cite{Anco2015, Grayshan2013, Himonas2014}, it admits a local conservation law and possesses single peakons of the form $u(x, t)=c^{1/k}e^{-|x-ct|}$ as well as multi-peakon solutions and exhibits wave breaking phenomena (see \cite{Yan2019} and the references therein). Since the $H^1$-norm of solution to the g-kbCH is conserved if and only if $b=k+1$, this means that \eqref{eq1} excludes the DP equation for the case $k=1$ and $b=3$.

In recent years, the question of well-posedness in different spaces for the g-kbCH equation \eqref{gch0}-\eqref{gch1} has become of great interest due to its abundant physical and mathematical properties and a series of achievements have been made in the study of the g-kbCH equation. Using a Galerkin-type approximation scheme, Himonas-Holliman \cite{Himonas2014} established the local well-posedness of the g-kbCH equation in Sobolev space $H^s$ on both the line and  the circle. Zhao et al. \cite{Zhao2014} extended the above well-posedness result to the Besov space $B_{p, r}^s(\mathbb{R})$ with  $s>\max\{1+{1}/{p}, {3}/{2}\}$ and $1\leq p, r\leq \infty$. Subsequently, Chen et al. \cite{Chen2015} solved the critical case for $(s, p, r)=(\frac{3}{2}, 2, 1)$. Motivated by Misio{\l}ek's idea in \cite{G02},
Holmes-Thompson \cite{HT} showed that the Cauchy problem for the g-kbCH equation  \eqref{gch0}-\eqref{gch1} is well-posed in
the space of bounded and continuously differentiable functions on the real line, denoted $C^1(\R)$, and equipped with the norm
$$\|f\|_{C^1}=\|f\|_{L^\infty}+\f\|\frac{\dd f}{\dd x}\g\|_{L^\infty}\quad\text{with}\quad\|f\|_{L^\infty}=\sup_{x\in\R}|f(x)|.$$
More precisely, for initial data $u_0\in C^1$, they obtained a corresponding uniquely
constructed solution $u(x, t) \in \mathcal{C}([-T, T]; \mathcal{C}^1(\R))$, and the solution depends continuously on the initial data. Furthermore, they founded a lifespan estimate, which depends on the size of the initial data. During this lifespan, the solution satisfies the size
estimate
\begin{align}
\|u(t)\|_{C^1}\leq 2\|u_0\|_{C^1}
\end{align}
for $0\leq t\leq T_1$ with $T_1$ depending on $k$ and $\|u_0\|_{C^1}$. Also, they proved the data-to-solution map is H\"{o}lder continuous from $C^\alpha$ to $\mathcal{C}([0,T];C^\alpha)$ with $\alpha\in[0,1)$.

From the PDE's point of view, it is crucial to know if an equation which models a physical phenomenon is well-posed in the
Hadamard's sense: existence, uniqueness, and continuous dependence of the solutions with respect to the initial data. In particular, continuity properties of the solution map is an important part of the well-posedness theory since the lack of continuous dependence would cause incorrect solutions or non meaningful solutions. Furthermore, the non-uniform continuity of data-to-solution map suggests that the local well-posedness cannot be established by the contraction mappings principle since this would imply Lipschitz continuity for the solution map. The method based on the  traveling wave solutions is used to show that non-uniform continuity of data-to-solution map of the periodic CH solutions on initial data in Sobolev
spaces $H^s$ with $s\geq 2$ \cite{Himonas2005} and with $s=1$ on both  the circle and the line \cite{Himonas2007} by  Himonas et al. Using precisely constructed peakon traveling wave solutions, Grayshan and Himonas \cite{Grayshan2013} showed that the solution map of g-kbCH equation is not uniformly continuous in Sobolev spaces $H^s$ with $s<\frac{3}{2}$. We would like to mention that, the peakon solutions belong to any Sobolev space $H^s$ with $s<\frac{3}{2}$, however they barely miss belonging to the space of continuously differentiable functions. Another method is based on the approximate solutions, which differs from the former since it does not require the availability of two actual solutions sequences. This method was used earlier by Koch and Tzvetkov \cite{Koch2005} for the Benjamin-Ono equation, and was further developed by Himonas et al. in the Sobolev spaces $H^s$ with $s>\frac{3}{2}$ for the CH equation  on the line \cite{Himonas2007} and on the circle \cite{Himonas2010}.
By developed a new approximation technique, Li et al. \cite{20jmfm, 20jde,21jmfm,24jde} showed that the solution maps of both the CH and Novikov equations are not uniformly continuous in Besov spaces $B_{p, r}^s$.
For general $k$, Wu-Yu-Xiao \cite{wu2021} proved that the data-to-solution map of the Cauchy problem \eqref{eq1} is not uniformly continuous on the initial data in Besov spaces $B_{p, r}^s(\mathbb{R})$ with $s>\max\{1+{1}/{p},{3}/{2}\}$, $(p, r)\in [1, \infty]\times [1, \infty)$ as well as in critical space $B_{2, 1}^{3/2}(\mathbb{R}).$

To the best of our knowledge, little has been done to show that the above shallow water wave equations exhibit the
non-uniform continuity property of yielding classical solution. We would like to ask that whether or not  H\"{o}lder continuous of the above classical solution obtained by \cite{HT}  can be improved to be Lipschitz continuous. In this paper, we shall prove that the data-to-solution map of the generalized Camassa-Holm equation \eqref{gch0}-\eqref{gch1} as function of the initial data is not uniformly continuous on the initial data in $C^1$. More precisely, we prove the following result.

\begin{theorem}\label{th1} Let $k\in \mathbb{Z}^+$.
Denote $U_R\equiv\f\{u_0\in C^1: \|u_0\|_{C^1}\leq R\g\}$ for any $R>0$.
Then the data-to-solution map of the Cauchy problem \eqref{gch0}-\eqref{gch1}
\begin{equation*}
\mathbf{S}_t:\begin{cases}
U_R \rightarrow \mathcal{C}([0, T] ; C^1) \cap \mathcal{C}^1([0, T] ; C),\\
u_0\mapsto \mathbf{S}_t(u_0),
\end{cases}
\end{equation*}
is not uniformly continuous from any bounded subset $U_R$ in $C^1$ into $\mathcal{C}([0,T];C^1)$. More precisely, there exists two sequences of solutions $\mathbf{S}^u_t(f_n+g_n)$ and $\mathbf{S}_t(f_n)$ such that
\bbal
&\|f_n\|_{C^1}\lesssim 1 \quad\text{and}\quad \lim_{n\rightarrow \infty}\|g_n\|_{C^1}= 0
\end{align*}
but
\bbal
\liminf_{n\rightarrow \infty}\|\mathbf{S}_t(f_n+g_n)-\mathbf{S}_t(f_n)\|_{C^1}\gtrsim t,  \quad \forall \;t\in(0,T_0],
\end{align*}
with small time $T_0$.
\end{theorem}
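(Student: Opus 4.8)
The plan is to exhibit the non-uniform dependence through two sequences of genuine solutions, arranged so that the separation is produced by a Lagrangian phase shift rather than by any naive expansion in the $C^1$ topology (which is unavailable, since $\pa_t\mathbf{S}_t(u_0)$ lives only in $C$, not in $C^1$). Fix $\phi\in C_c^\infty(\R)$ with $\phi\equiv1$ on a neighbourhood of $0$ and set
\[
f_n(x)=a_n+\fr{1}{n}\,\phi(x)\cos(nx),\qquad a_n=n^{-1/k},\quad b_n=2^{1/k}n^{-1/k},\quad g_n=b_n-a_n .
\]
Then $g_n$ is a constant with $\|g_n\|_{C^1}=g_n\to0$, while $\|f_n\|_{C^1}\les1$ and $\|f_n+g_n\|_{C^1}\les1$, the $C^1$-size being carried by the $O(1)$ derivative of the oscillation $\fr1n\phi\cos(nx)$; the amplitudes $a_n,b_n\to0$ are tuned so that the transport speeds $a_n^k=n^{-1}$ and $b_n^k=2n^{-1}$ differ by exactly $n^{-1}$. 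By the local well-posedness and the bound $\|u(t)\|_{C^1}\le 2\|u_0\|_{C^1}$ recalled above, the solutions $u^{(1)}=\mathbf{S}_t(f_n+g_n)$ and $u^{(2)}=\mathbf{S}_t(f_n)$ exist on a common interval $[0,T_0]$ and stay in a fixed ball of $C^1$, uniformly in $n$.

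The core is a Lagrangian representation. Let $X_i(t,\xi)$ solve $\pa_t X_i=(u^{(i)})^k(t,X_i)$, $X_i(0,\xi)=\xi$. Integrating along characteristics gives $u^{(i)}(t,X_i)=f_n^{(i)}(\xi)+\int_0^t\mathbf{P}(u^{(i)})(s,X_i)\,\dd s$, and differentiating this in $\xi$ yields a companion identity for $\pa_xu^{(i)}$ in terms of $\pa_\xi f_n$ (the constant $g_n$ drops) and the Jacobian $\pa_\xi X_i$. The crucial point is that every forcing term is governed by the $C^1$ ball alone: $\mathbf{P}(u)$ and $\pa_x\mathbf{P}(u)$ are bounded in $L^\infty$ because $(1-\pa_x^2)^{-1}$, $\pa_x(1-\pa_x^2)^{-1}$ and $\pa_x^2(1-\pa_x^2)^{-1}=(1-\pa_x^2)^{-1}-\mathrm{Id}$ all map $L^\infty$ into $L^\infty$ through the kernel $\tfrac12 e^{-|x|}$ and its derivative, so no second spatial derivative of $u$ is ever needed. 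A Gronwall estimate then gives $\pa_\xi X_i=1+o(1)$ and $\pa_xu^{(i)}(t,X_i(t,\xi))=\pa_\xi f_n(\xi)+O(t)$ in $L^\infty$, uniformly in $n$, with the $O(t)$ term, generated by $\mathbf{P}$, equal for the two solutions up to a quantity tending to $0$ with $n$.

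Finally I would quantify the phase shift. Subtracting the two flow equations and using $u^{(i)}(s,X_i)=f_n^{(i)}(\xi)+O(s)$ shows
\[
X_1(t,\xi)-X_2(t,\xi)=(b_n^k-a_n^k)\,t+o(n^{-1}t)=n^{-1}t\,(1+o(1)),
\]
so after inverting the near-identity flows the initial labels satisfy $n\big(X_1^{-1}(t,x)-X_2^{-1}(t,x)\big)=\pm t\,(1+o(1))$. Evaluating the derivative identity at a common Eulerian point and writing $\xi_i=X_i^{-1}(t,x)$, with $\pa_\xi f_n(\xi)=-\phi(\xi)\sin(n\xi)+O(n^{-1})$, gives
\[
\pa_xu^{(1)}(t,x)-\pa_xu^{(2)}(t,x)=-\big(\phi(\xi_1)\sin(n\xi_1)-\phi(\xi_2)\sin(n\xi_2)\big)+o(t),
\]
and the bracket equals $2\phi\cos\!\big(n\tfrac{\xi_1+\xi_2}{2}\big)\sin\!\big(n\tfrac{\xi_1-\xi_2}{2}\big)$ up to lower order. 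Choosing $x=x_n$ so that $\phi\equiv1$ there and the cosine is $\approx\pm1$ yields modulus $\approx 2|\sin(t/2)|\gtrsim t$ for small $t$. Since $\|\cdot\|_{C^1}\ge\|\pa_x\cdot\|_{L^\infty}$, this produces $\liminf_{n\to\infty}\|\mathbf{S}_t(f_n+g_n)-\mathbf{S}_t(f_n)\|_{C^1}\gtrsim t$ on $(0,T_0]$.

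The main obstacle is the uniform-in-$n$ control of all error terms at the threshold regularity $C^1$: because $\pa_x^2u^{(i)}$ is of size $n$ and is not bounded by the $C^1$ a priori estimate, one can neither differentiate the difference equation twice nor Taylor-expand in time within $C^1$. The Lagrangian formulation is precisely what circumvents this, replacing spatial differentiation of $u$ by differentiation of the well-controlled flow and confining all analytic forcing to the smoothing operator $\mathbf{P}$, whose $L^\infty$ bounds hold on the fixed $C^1$ ball. The remaining care is to verify that the $\mathbf{P}$-generated $O(t)$ corrections cancel to leading order in the difference (hence are genuinely $o(t)$) and that the separation $X_1-X_2=n^{-1}t(1+o(1))$ holds with error $o(1)$ uniformly on $[0,T_0]$; both reduce to the elementary $L^\infty$ mapping properties above together with the smallness of $g_n$ and of the amplitudes $a_n,b_n$.
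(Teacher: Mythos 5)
Your route is genuinely different from the paper's. The paper never estimates the difference of the two exact solutions directly: for $k\geq 3$ it compares each exact solution with the solution of the pure transport equation $\partial_t v+v^k\partial_x v=0$ with the same data, and the separation comes from an \emph{amplitude} mechanism — the first Taylor coefficient $t\,u_0^k\partial_xu_0$ has $C^1$-norm bounded below because $\|(f_n+g_n)^k\partial_x^2f_n\|_{L^\infty}\geq c_0$ — while for $k=1,2$ it changes the data so that $\|u_0\|_{L^\infty}$ and $\|u_0\|^2_{H^1}$ are simultaneously small and exploits $H^1$-conservation. Your proposal instead uses the classical \emph{phase-shift} mechanism (two families with low-frequency speeds differing by $n^{-1}$, read off in Lagrangian variables), which is in the spirit of the Himonas--Kenig--Misio{\l}ek traveling-speed arguments. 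That is a legitimate alternative architecture, but as written it has a gap at its quantitative core.

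The gap is the uniform-in-$n$ error control, which you claim "reduces to the elementary $L^\infty$ mapping properties" of the smoothing operators. The phase mechanism needs $X_1(t,\xi)-X_2(t,\xi)=n^{-1}t\,(1+o(1))$, i.e. \emph{all} errors in the flow separation must be $o(n^{-1}t)$ — an accuracy that degenerates as $n\to\infty$. The error in the speed difference is driven by $\mathbf{P}(u^{(1)})(s,X_1)-\mathbf{P}(u^{(2)})(s,X_2)$, and the Eulerian mapping properties you cite bound this only by a constant times $\|u^{(1)}(s)-u^{(2)}(s)\|_{C^1}$ plus $\|\partial_x\mathbf{P}\|_{L^\infty}|X_1-X_2|$. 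But $\|u^{(1)}(s)-u^{(2)}(s)\|_{C^1}\gtrsim s$ is exactly the conclusion of the theorem (the derivatives really do separate at rate $s$), so this estimate is circular: it yields a speed-difference error of order $s\,n^{-(k-1)/k}$ (order $s$ when $k=1$), hence a flow-separation error of order $t^3n^{-(k-1)/k}$, and after multiplication by the frequency $n$ a phase error of order $n^{1/k}t^3$, which blows up as $n\to\infty$ for fixed $t$; the same circularity undermines your claim that the $O(t)$ corrections to $\partial_xu^{(i)}$ along the two flows agree up to $o(1)$. To break it you need a structural ingredient that is absent from the proposal: every comparison, including the nonlocal terms, must be performed at the \emph{same Lagrangian label} — i.e. rewrite $G*(\cdots)$ and $\partial_xG*(\cdots)$ via the change of variables $y=X_i(t,\eta)$, so that the difference of the $\mathbf{P}$-terms is expressed through kernel differences (controlled by $|X_1-X_2|$ with $O(1)$ constants), Jacobian differences, and along-flow differences of $u,\partial_xu$ at the label $\eta$, whose initial size is $\|g_n\|_{C^1}$ and which carry no oscillation mismatch (alternatively, one must prove and propagate that $u^{(1)}-u^{(2)}$ stays frequency-localized near $n$ so the kernel damps it by $n^{-1}$). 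With such a scheme your outline can likely be completed; with only the $L^\infty$ mapping properties invoked in the proposal, the error budget fails precisely where the proof must be sharp — which is also why the paper takes the detour through the Burgers approximation ($k\geq3$) and a different construction for $k=1,2$ rather than comparing the two exact solutions head-on.
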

\begin{remark}
Theorem \ref{th1} demonstrates that dependence of the generalized Camassa-Holm solution on initial data in
$C^1$ can not be better than continuous, which can be a complement of previous result in \cite{HT,G02}.
\end{remark}
\begin{remark}
We should mention that, our Theorem \ref{th1} holds for the Torus case.
\end{remark}
\begin{remark}
Let us make some comments on the idea.

{\bf Case $k\geq3$.}\;
We introduce the following approximate system
\begin{equation}\label{B}
\begin{cases}
\partial_tv+v^k\pa_xv=0,\\
v(0,x)=u_0(x).
\end{cases}
\end{equation}
From now on, we denote the approximate solution of \eqref{B} by $\mathbf{S}^v_{t}(u_0)$. Fortunately, we find a lifespan estimate, which depends on the size of initial data. During this lifespan, the $L^\infty$-norm of approximate solution $\mathbf{S}^v_{t}(u_0)$ remains bounded by the size of the initial data in a purely $L^\infty$ framework.

{\bf Step 1.} We decompose the approximate solution maps as
\bbal
&\mathbf{S}^v_{t}(\underbrace{f_n+g_n}_{=~u^n_0})-\mathbf{S}^v_{t}(f_n)=\underbrace{\mathbf{S}^v_{t}(u^n_0)-u^n_0+t(u^n_{0})^k\pa_xu^n_{0}}_{=~\mathbf{I}_1(u^n_0)}
+\underbrace{f_n-\mathbf{S}^v_{t}(f_n)}_{=~\mathbf{I}_2(f_n)}+g_n-t\underbrace{(f_n+g_n)^k\pa_xu^n_{0}}_{\text{\bf Leading term}}.
\end{align*}
We expect that the {\bf Leading term} plays an essential role since it has a positive lower bound when $n$ is large enough. To deal with the errors estimates $\mathbf{I}_1(u^n_{0})$ and $\mathbf{I}_2(f_n)$  in $C^{1}$, the key point is that, we need to establish the $L^\infty$-estimation of the solution map and finetuned $C^{0,1}$-estimation of $\mathbf{S}^v_{t}(u_0)-u_0$.

{\bf Step 2.} We introduce the errors
\bbal
&w_1:=\mathbf{S}^u_{t}(f_n+g_n)-\mathbf{S}^v_{t}(f_n+g_n)\quad\text{and}\quad w_2:=\mathbf{S}^u_{t}(f_n)-\mathbf{S}^v_{t}(f_n),
\end{align*}
and compare the difference between actual and approximate solutions with same initial data
\bal\label{key}
&\mathbf{S}^u_{t}(f_n+g_n)-\mathbf{S}^u_{t}(f_n)=\mathbf{S}^v_{t}(f_n+g_n)-\mathbf{S}^v_{t}(f_n)+w_1-w_2.
\end{align}
Equality \eqref{key} is crucial since it reduces finding a lower positive
bound for the difference of the actual solution sequences to finding a
lower positive bound for the difference of the approximate solution
sequences. It remains to estimate the $C^1$-norm of Error terms $w_1$ and $w_2$. More precisely, based on the suitable choice of $f_n$ and $g_n$, we shall prove that for a short time $t\in(0,1]$
\bbal
\|w_1\|_{C^{1}}+\|w_2\|_{C^{1}}\les t^2+\varepsilon_n\quad\text{with}\quad \varepsilon_n\rightarrow0 \;\text{as}\; n\rightarrow\infty.
\end{align*}
Combining {\bf Step 1} and {\bf Step 2} enable us that the data-to-solution map of \eqref{gch0}-\eqref{gch1} is not uniformly continuous in
$C^1(\R)$.

{\bf Case $k=1,2$.}\;
We should emphasize that, for the low regularity space $C^1$, it is not possible to proceed as in the previous method due to the lack of the key estimation $\|u\|_{L^\infty}\les \|u_0\|_{L^\infty}$. In fact, due to the appearance of $(\pa_xu)^2$ or $(\pa_xu)^3$, we just obtain the rough estimation $\|u\|_{C^1}\les \|u_0\|_{C^1}$. To bypass the difficulty, we make a new observation: $\|u\|_{L^\infty}\les \|u_0\|_{L^\infty}+\|u_0\|^2_{H^1}$. Next, a key point is to construct initial data $u_0$ satisfying that $\|u_0\|_{L^\infty}$ and $\|u_0\|^2_{H^1}$ possess the same level of size. Finally, we can adopt the standard procedure to obtain the non-uniform continuous of the data-to-solution map for the Camassa-Holm and Novikov equations in $C^1(\R)$.
\end{remark}
{\bf Notations}\;
Given a Banach space $S$, we denote its norm by $\|\cdot\|_{S}$. We also use the simplified notation $\|f_1,\cdots,f_n\|_{S}:=\|f_1\|_{S}+\cdots+\|f_n\|_{S}$. For $I\subset\R$, we denote by $\mathcal{C}(I;S)$ the set of continuous functions on $I$ with values in $S$. For any two positive quantities $X$ and $Y$, $X\lesssim(\gtrsim) Y$ means that there is a uniform positive constant $C$ independent of $X$ and $Y$ such that $X\leq(\geq) CY$. $X\approx Y$ means that $X\lesssim Y$ and $X\gtrsim Y$. Also, we denote $X\ll Y$ if $X\leq \varepsilon Y$ for some sufficiently small constant $\varepsilon>0$. We let $C^1(\R)$ be the Banach space of bounded and continuously differentiable functions which are equipped with the norm $\|f\|_{C^1}=\|f\|_{L^\infty}+\|\pa_xf\|_{L^\infty}$ with $\|f\|_{L^\infty}=\sup_{x\in\R}|f(x)|$. We use $\mathcal{S}(\R)$ and $\mathcal{S}'(\R)$ to denote Schwartz functions and the tempered distributions spaces on $\R$, respectively.
Let us recall that for all $u\in \mathcal{S}'$, the Fourier transform of $u$ is defined by
$
\widehat{u}(\xi)=\int_{\R}e^{-\mathrm{i}x\xi}u(x)\dd x$ for any $\xi\in\R.
$
The inverse Fourier transform of any $g$ is given by
$
(\mathcal{F}^{-1} g)(x)=\frac{1}{2 \pi} \int_{\R} g(\xi) e^{\mathrm{i}x \cdot \xi} \dd \xi.
$

{\bf Organization of our paper.} In Section \ref{sec2}, we establish some technical Lemmas which will be used in the sequel. In Section \ref{sec3} and Section \ref{sec4}, based on a new approximation technique and different construction of initial sequences, we prove Theorem \ref{th1} holds for the real-line case when $k\geq3$ and for the real-line case when $k=1,2$, respectively. In Appendix, based on the traveling wave solutions, we prove Theorem \ref{th1} holds for the Torus case when $k=1$.
\section{Preliminary Lemmas}\label{sec2}
In this section, we establish some useful lemmas.
\subsection{Construction of initial sequences}\label{subs1}
We need to introduce a smooth, radial cut-off function to localize the frequency region. Precisely,
let $\widehat{\phi}\in \mathcal{C}^\infty_0(\mathbb{R})$ be an even, real-valued and non-negative function on $\R$ and satisfy
\begin{numcases}{\widehat{\phi}(\xi)=}
1,&if $|\xi|\leq \frac{1}{4}$,\nonumber\\
0,&if $|\xi|\geq \frac{1}{2}$.\nonumber
\end{numcases}
\begin{remark}\label{re5}
Based on the choice of $\widehat{\phi}$ and the  Fourier-Plancherel formula, we have $\phi(x)=\mathcal{F}^{-1}(\widehat{\phi}(\xi))$.
Thus we find that
$$\|\phi\|_{L^\infty}=\phi(0)=\frac{1}{2\pi}\int_{\R}\widehat{\phi}(\xi)\dd \xi>0.$$
\end{remark}
We establish the following crucial lemma which will be used later on.
\begin{lemma}\label{lm1}
Let $n\gg1$. Define the high-low frequency functions $f_n$ and $g_n$ by
\bbal
&f_n=2^{-n}\phi(x)\cos \left(2^nx\right)\\
& g_n=2^{-\frac{n}{k}}\phi(x).
\end{align*}
Then there exists a positive constant $C=C(\phi)$ which does not depend on $n$ such that
\bal
&\|g_n\|_{L^\infty}+\|\pa_xg_n\|_{L^\infty}+\|\pa_{x}^2g_n\|_{L^\infty}\leq C2^{-\frac{n}{k}},\label{m1}\\
&\|f_n\|_{L^\infty}\leq C2^{-n},\quad\|\pa_xf_n\|_{L^\infty}\leq C,\quad \|\pa_{x}^2f_n\|_{L^\infty}\leq C2^{n},\label{m2}\\
&\left\|(f_n+g_n)^k\pa_{x}^2f_n\right\|_{L^\infty}\geq c_0=:\frac{\phi^{k+1}(0)}{2}. \label{m3}
\end{align}
\end{lemma}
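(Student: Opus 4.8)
The plan is to establish the three estimates \eqref{m1}, \eqref{m2}, \eqref{m3} separately, treating the first two as routine consequences of differentiation and reserving the real work for the lower bound \eqref{m3}. First I would record the basic properties of $\phi$: since $\widehat{\phi}\in\mathcal{C}^\infty_0(\R)$, the function $\phi=\mathcal{F}^{-1}(\widehat{\phi})$ lies in the Schwartz class $\mathcal{S}(\R)$, so that $\|\phi\|_{L^\infty}$, $\|\pa_x\phi\|_{L^\infty}$ and $\|\pa_x^2\phi\|_{L^\infty}$ are all finite constants depending only on $\phi$; moreover, by Remark~\ref{re5}, $\phi(0)=\|\phi\|_{L^\infty}>0$. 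These facts supply the constant $C=C(\phi)$.

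For \eqref{m1}, since $g_n=2^{-n/k}\phi$ is merely a rescaling of $\phi$, I would differentiate directly: $\pa_xg_n=2^{-n/k}\pa_x\phi$ and $\pa_x^2g_n=2^{-n/k}\pa_x^2\phi$, whence the sum of the three $L^\infty$-norms is bounded by $2^{-n/k}(\|\phi\|_{L^\infty}+\|\pa_x\phi\|_{L^\infty}+\|\pa_x^2\phi\|_{L^\infty})$. For \eqref{m2}, I would differentiate $f_n=2^{-n}\phi\cos(2^nx)$ by the Leibniz rule, noting that each derivative landing on $\cos(2^nx)$ produces a factor $2^n$. Explicitly, $\pa_xf_n=2^{-n}(\pa_x\phi)\cos(2^nx)-\phi\sin(2^nx)$ and $\pa_x^2f_n=2^{-n}(\pa_x^2\phi)\cos(2^nx)-2(\pa_x\phi)\sin(2^nx)-2^n\phi\cos(2^nx)$; bounding each bracketed coefficient in $L^\infty$ and using $|\cos|,|\sin|\le 1$ yields the claimed sizes $2^{-n}$, $O(1)$ and $O(2^n)$, the last term of $\pa_x^2f_n$ being dominant.

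The main point is \eqref{m3}, and the key idea is to bound the $L^\infty$-norm from below by evaluating at the single point $x=0$, chosen because there the envelope $\phi$ and the oscillation $\cos(2^nx)$ are simultaneously maximal and positive. At $x=0$ one has $(f_n+g_n)(0)=\phi(0)(2^{-n}+2^{-n/k})>0$ and, from the formula above, $\pa_x^2f_n(0)=2^{-n}\pa_x^2\phi(0)-2^n\phi(0)$. Discarding the nonnegative term $2^{-n}$ in the first factor gives $\big((f_n+g_n)(0)\big)^k\ge(2^{-n/k}\phi(0))^k=2^{-n}\phi^k(0)$, while for $n$ large enough that $2^{-n}|\pa_x^2\phi(0)|\le\tfrac12 2^n\phi(0)$ one has $|\pa_x^2f_n(0)|\ge\tfrac12 2^n\phi(0)$. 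Multiplying, $\|(f_n+g_n)^k\pa_x^2f_n\|_{L^\infty}\ge\big|\big((f_n+g_n)^k\pa_x^2f_n\big)(0)\big|\ge 2^{-n}\phi^k(0)\cdot\tfrac12 2^n\phi(0)=\tfrac12\phi^{k+1}(0)=c_0$.

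I expect the only genuine subtlety to be \eqref{m3}: one must select the evaluation point so that the positive envelope and the oscillatory factor reinforce rather than cancel, and then confirm that the leading term $-2^n\phi\cos(2^nx)$ of $\pa_x^2f_n$ survives (it is not annihilated at $x=0$, where $\cos=1$) while the lower-order corrections, which carry an extra power $2^{-2n}$ relative to it, are negligible for $n\gg1$. Once the point $x=0$ is fixed, the estimate reduces to the elementary inequalities above, so no further difficulty arises.
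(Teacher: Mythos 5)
Your proposal is correct and follows essentially the same route as the paper: direct differentiation for \eqref{m1}--\eqref{m2}, and for \eqref{m3} evaluation at the single point $x=0$, where the paper likewise uses $(f_n+g_n)(0)=(2^{-n}+2^{-n/k})\phi(0)$ and $\pa_x^2f_n(0)=-2^n\bigl(\phi(0)-2^{-2n}\phi''(0)\bigr)$ to absorb the lower-order correction for $n\gg1$. The only cosmetic difference is that you discard the $2^{-n}$ term before raising to the $k$-th power, whereas the paper keeps $(2^{-n}+2^{-n/k})^k$ intact; both yield the bound $\tfrac12\phi^{k+1}(0)$.
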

\begin{proof}
\eqref{m1} is obvious and \eqref{m2} holds since the following
\bbal
&\pa_xf_n=-\phi(x)\sin\left(2^nx\right)+2^{-n}\phi'(x)\cos \left(2^nx\right),\\
&\pa_{x}^2f_n=-2^n\f(\phi(x)\cos \left(2^nx\right)+2^{1-n}\phi'(x)\sin \left(2^nx\right)-2^{-2n}\phi''(x)\cos \left(2^nx\right)\g).
\end{align*}
For $n\gg1$, one has
\bbal
\f|(f_n+g_n)^k\pa_{x}^2f_n\g|(x=0)&=2^n(2^{-n}+2^{-\frac{n}{k}})^k\phi^k(0)\f|\phi(0)-2^{-2n}\phi''(0)\g|
\geq \frac{\phi^{k+1}(0)}{2},
\end{align*}
which implies the desired \eqref{m3}.  Thus we finish the proof of Lemma \ref{lm1}.
\end{proof}
\subsection{Estimation of approximate solution}\label{subs2}
\begin{lemma}\label{lm0}
Assume that $u_0\in \mathcal{S}$ with $\|u_0\|_{L^\infty}\ll 1\les\|\pa_xu_0\|_{L^\infty}\ll\|\pa^2_xu_0\|_{L^\infty}$. The data-to-solution map $u_0\mapsto \mathbf{S}^v_t(u_0)$ of the Cauchy problem \eqref{B} satisfies that
\begin{align*}
&\|\mathbf{S}^v_t(u_0)\|_{L^\infty(\R)}=\|u_0\|_{L^\infty(\R)},\\
&\|\pa_x\mathbf{S}^v_t(u_0)\|_{L^{\infty}(\R)}\leq 2\|\pa_xu_0\|_{L^{\infty}(\R)},\\
&\|\pa^2_x\mathbf{S}^v_t(u_0)\|_{L^{\infty}(\R)}\les\|\pa^2_xu_0\|_{L^{\infty}(\R)}+1
\end{align*}
for all $t\in(0,T_2]$ with $T_2=\min\{1,1/(2k\|u_0\|^{k-1}_{L^\infty}\|\partial_xu_0\|_{L^{\infty}})\}$.
\end{lemma}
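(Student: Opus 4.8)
The plan is to solve \eqref{B} explicitly by the method of characteristics and then read off all three estimates from the resulting flow map. Since $v$ is transported along characteristics with speed $v^k$, the characteristic emanating from $\xi$ is the straight line $X(t,\xi)=\xi+t\,u_0^k(\xi)$, along which $v$ is constant, so that $v(t,X(t,\xi))=u_0(\xi)$. First I would record the Jacobian
\[
J(t,\xi):=\pa_\xi X(t,\xi)=1+tk\,u_0^{k-1}(\xi)u_0'(\xi),
\]
and observe that for $t\le T_2$ the time restriction forces $|tk\,u_0^{k-1}u_0'|\le tk\|u_0\|_{L^\infty}^{k-1}\|\pa_xu_0\|_{L^\infty}\le \tfrac12$, hence $J\in[\tfrac12,\tfrac32]$. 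In particular $X(t,\cdot)$ is a smooth, strictly increasing bijection of $\R$ (it is proper since $u_0$ is bounded), which guarantees that the characteristics foliate the strip and no shock forms on $[0,T_2]$.

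The $L^\infty$-identity is then immediate: because $X(t,\cdot)$ is onto and $v(t,X(t,\xi))=u_0(\xi)$, the function $v(t,\cdot)$ attains exactly the same set of values as $u_0$, whence $\|\mathbf{S}^v_t(u_0)\|_{L^\infty}=\|u_0\|_{L^\infty}$. For the first derivative I would differentiate the relation $v(t,X(t,\xi))=u_0(\xi)$ in $\xi$ to obtain $\pa_xv(t,X(t,\xi))=u_0'(\xi)/J(t,\xi)$; the lower bound $J\ge\tfrac12$ then yields $\|\pa_x\mathbf{S}^v_t(u_0)\|_{L^\infty}\le 2\|\pa_xu_0\|_{L^\infty}$.

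The second derivative is the crux. Differentiating once more in $\xi$ and using $\pa_\xi J=tk[(k-1)u_0^{k-2}(u_0')^2+u_0^{k-1}u_0'']$, I expect the two terms proportional to $u_0^{k-1}u_0'u_0''$ to cancel, leaving the clean expression
\[
\pa_x^2v(t,X(t,\xi))=\frac{u_0''(\xi)-tk(k-1)u_0^{k-2}(\xi)\big(u_0'(\xi)\big)^3}{J(t,\xi)^3}.
\]
With $J\ge\tfrac12$ the denominator contributes only a harmless factor $\le 8$, so the estimate reduces to controlling the extra term $tk(k-1)\|u_0\|_{L^\infty}^{k-2}\|\pa_xu_0\|_{L^\infty}^3$. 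Here I would invoke the full time restriction $t\le 1/(2k\|u_0\|_{L^\infty}^{k-1}\|\pa_xu_0\|_{L^\infty})$ to bound this by $\tfrac{k-1}{2}\,\|\pa_xu_0\|_{L^\infty}^2/\|u_0\|_{L^\infty}$, and then close the argument with the Landau--Kolmogorov interpolation inequality $\|\pa_xu_0\|_{L^\infty}^2\les\|u_0\|_{L^\infty}\|\pa_x^2u_0\|_{L^\infty}$, which turns it into $\les\|\pa_x^2u_0\|_{L^\infty}$ (the spare $+1$ giving extra slack, and the case $k=1$ being trivial since the term vanishes).

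The main obstacle is precisely this last step: the cubic derivative term $tk(k-1)u_0^{k-2}(u_0')^3$ is genuinely dangerous and cannot be tamed by the time restriction alone, since $\|\pa_xu_0\|_{L^\infty}^2/\|u_0\|_{L^\infty}$ need not be small. The hypothesis $\|u_0\|_{L^\infty}\ll1\les\|\pa_xu_0\|_{L^\infty}\ll\|\pa_x^2u_0\|_{L^\infty}$ is exactly what is needed to make the interpolation effective, and this is where the precise scaling of the data from Lemma~\ref{lm1} enters. A secondary point to verify is that $v$ stays in $\mathcal S$ (or at least in $C^2$ with bounded derivatives) so that the differentiations along characteristics are legitimate on $[0,T_2]$, which again follows from $J$ being bounded away from $0$.
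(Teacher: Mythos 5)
Your proposal is correct, and it reaches all three estimates by a genuinely different route than the paper. The paper's proof is a purely a priori argument: given a $C^1$ solution, it introduces the Lagrangian flow $\phi$ of $[\mathbf{S}^v_t(u_0)]^k$, notes that $v$, $v_x$, $v_{xx}$ satisfy ODEs along this flow, and integrates them; the $L^\infty$ identity comes from preservation of the sup norm under the flow, the first-derivative bound from a Riccati-type inequality $\lambda'\leq k\|u_0\|_{L^\infty}^{k-1}\lambda^2$ solved up to $T_2$, and the second-derivative bound is left in the integral form $\|\partial_x^2v\|_{L^\infty}\lesssim\|\partial_x^2u_0\|_{L^\infty}+k(k-1)\int_0^t\|v\|_{L^\infty}^{k-2}\|\partial_xv\|_{L^\infty}^3\,\mathrm{d}\tau$, whose last term is benign for the data actually used later (this is the origin of the ``$+1$''). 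You instead exploit that \eqref{B} is an exactly solvable scalar conservation law: straight-line characteristics $X(t,\xi)=\xi+tu_0^k(\xi)$, the Jacobian bound $J\in[\tfrac12,\tfrac32]$ on $[0,T_2]$, and the exact formulas $v_x=u_0'/J$ and $v_{xx}=\bigl(u_0''-tk(k-1)u_0^{k-2}(u_0')^3\bigr)/J^3$ (your claimed cancellation of the $u_0^{k-1}u_0'u_0''$ terms is correct; I verified it), after which the cubic term is killed by the time restriction together with Landau's inequality $\|u_0'\|_{L^\infty}^2\leq 2\|u_0\|_{L^\infty}\|u_0''\|_{L^\infty}$. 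Your version buys three things: existence of the solution on $[0,T_2]$ comes for free rather than being assumed; the $L^\infty$ identity is transparent (equal ranges under a bijection); and your second-derivative bound is in fact sharper, namely $\lesssim\|\partial_x^2u_0\|_{L^\infty}$ with no ``$+1$'' --- moreover, contrary to your closing remark, this step needs neither the hypothesis $\|u_0\|_{L^\infty}\ll1\lesssim\|\partial_xu_0\|_{L^\infty}$ nor the scaling of Lemma \ref{lm1}, since Landau's inequality holds unconditionally on $\mathbb{R}$. What the paper's method buys in exchange is robustness: the same flow-plus-Gronwall scheme is reused essentially verbatim where no explicit solution formula exists, namely for the full equation \eqref{gch0}--\eqref{gch1} in Proposition \ref{pro2} and for the Camassa--Holm equation in Lemma \ref{lmm0}, whereas your explicit computation is special to the Burgers-type model \eqref{B}.
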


\begin{proof}
Given a $C^1$-solution $\mathbf{S}^v_t(u_0)$ of Eq.\eqref{B}, we may solve the following ODE to find the flow induced by $[\mathbf{S}^v_t(u_0)]^k$:
\begin{align}\label{ode}
\quad\begin{cases}
\frac{\dd}{\dd t}\phi(t,x)=[\mathbf{S}^v_t(u_0)]^k(t,\phi(t,x)),\\
\phi(0,x)=x.
\end{cases}
\end{align}
Here we set $v(t,x)=\mathbf{S}^v_t(u_0)$ for simplicity. From $\eqref{B}_1$, we get that
\bbal
\frac{\dd}{\dd t}v(t,\phi(t,x))&=v_{t}(t,\phi(t,x))+v_{x}(t,\phi(t,x))\frac{\dd}{\dd t}\phi(t,x)
\\&=\left(v_t+v^k v_x\right)(t, \phi(t, x))=0.
\end{align*}
Integrating the above with respect to time variable yields that
\bbal
v(t,\phi(t,x))=u_0(x).
\end{align*}
Using the fact that the $L^{\infty}$-norm of any function is preserved under the flow $\phi$, then we have
\bal\label{vv}
\|v(t, x)\|_{L^{\infty}(\R)}=\|v(t, \phi(t, x))\|_{L^{\infty}(\R)}=\|u_0(x)\|_{L^{\infty}(\R)}.
\end{align}
Applying $\pa_x$ to $\eqref{B}_1$ yields
\bal\label{u1}
v_{tx}+v^k\pa_{x}^2v=-kv^{k-1}(\pa_xv)^2.
\end{align}
Combining \eqref{ode} and \eqref{u1}, we obtain
\bbal
\frac{\dd}{\dd t}v_x(t,\phi(t,x))&=v_{tx}(t,\phi(t,x))+v_{xx}(t,\phi(t,x))\frac{\dd}{\dd t}\phi(t,x)\nonumber\\
&=\left(v_{tx}+v^kv_{xx}\right)(t,\phi(t,x))=-kv^{k-1}(\pa_xv)^2(t,\phi(t,x)),
\end{align*}
which means that
\bbal
v_x(t,\phi(t,x))=\pa_xu_0(x)-k\int^t_0v^{k-1}(\pa_xv)^2(\tau,\phi(\tau,x))\dd \tau.
\end{align*}
Notice that the $L^{\infty}$-norm of any function is preserved under the flow $\phi$ again, from \eqref{vv}, we get
\begin{align}\label{ml}
\left\|v_x(t,x)\right\|_{L^{\infty}} &\leq\left\|\partial_x u_0\right\|_{L^{\infty}}+k\int_0^t\left(\left\|v(\tau,x)\right\|^{k-1}_{L^{\infty}}\|v_x(\tau,x)\|^2_{L^{\infty}}\right) \dd \tau\nonumber\\
&\leq\left\|\partial_x u_0\right\|_{L^{\infty}}+k\|u_0\|^{k-1}_{L^{\infty}}\int_0^t\|v_x(\tau,x)\|^2_{L^{\infty}} \dd \tau.
\end{align}
Setting
\bbal
\lambda(t):= \left\|\partial_x u_0\right\|_{L^{\infty}}+k\|u_0\|^{k-1}_{L^{\infty}}\int_0^t\|v_x(\tau,x)\|^2_{L^{\infty}} \dd \tau \quad\text{with}\quad \lambda(0):= \left\|\partial_x u_0\right\|_{L^{\infty}},
\end{align*}
then from \eqref{ml}, one has
\bbal
\frac{\dd}{\dd t}\lambda(t)\leq k\left\|u_0\right\|^{k-1}_{L^{\infty}}\lambda^2(t)\quad\Leftrightarrow\quad -\frac{\dd}{\dd t}\left(\frac{1}{\lambda(t)}\right)\leq k\left\|u_0\right\|^{k-1}_{L^{\infty}}.
\end{align*}
Solving the above directly yields for $t\in(0,1/(2k\left\|u_0\right\|^{k-1}_{L^{\infty}}\|\pa_xu_0\|_{L^{\infty}})]$
\bbal
\|\pa_xv(t)\|_{L^{\infty}}\leq \frac{\|\partial_xu_0\|_{L^{\infty}}}{1-tk\left\|u_0\right\|^{k-1}_{L^{\infty}}\|\partial_xu_0\|_{L^{\infty}}}\leq 2\|\partial_xu_0\|_{L^{\infty}}.
\end{align*}
Similarly, we also have
\bbal
\|\pa_x^2v(t)\|_{L^{\infty}}\les \|\partial^2_xu_0\|_{L^{\infty}}+k(k-1)\int^t_0\|v(\tau,x)\|_{L^{\infty}}^{k-2}\|\pa_xv(\tau,x)\|_{L^{\infty}}^{3}\dd \tau.
\end{align*}
This completes the proof of Lemma \ref{lm0}.
\end{proof}

\begin{lemma}\label{lm2} Assume that $u_0\in \mathcal{S}$ with $\|u_0\|_{L^\infty}\ll \|\pa_xu_0\|_{L^\infty}\lesssim 1\ll\|\pa^2_xu_0\|_{L^\infty}$.  The data-to-solution map $u_0\mapsto \mathbf{S}^v_t(u_0)$ of the Cauchy problem \eqref{B} satisfies that for $t\in(0,1]$
\bal
&\f\|\mathbf{S}^v_{t}(u_0)-u_0\g\|_{L^\infty}\leq Ct\|u_0\|^k_{L^{\infty}},\label{y1}\\
&\f\|\pa_x\left(\mathbf{S}^v_{t}(u_0)-u_0\right)\g\|_{L^\infty}\leq Ct\left(\|u_0\|^{k-1}_{L^{\infty}}+\|u_0\|^k_{L^{\infty}}\|\pa_{x}^2u_0\|_{L^{\infty}}\right),\label{y2}\\
&\f\|\pa_{x}^2\left(\mathbf{S}^v_{t}(u_0)-u_0\right)\g\|_{L^\infty}
\leq Ct\left(\|\pa_{x}^2u_0\|_{L^\infty}+\|u_0\|^k_{L^\infty}\|\pa_{x}^3u_0\|_{L^\infty}\right).\label{y3}
\end{align}
\end{lemma}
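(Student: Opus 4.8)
The plan is to route every estimate through the characteristic flow $\phi(t,x)$ from the proof of Lemma \ref{lm0} rather than differentiating in time at a fixed spatial point. First I would record the two structural facts that make this work. Writing $v=\mathbf{S}^v_t(u_0)$, since $\tfrac{\dd}{\dd t}\phi(t,x)=v^k(t,\phi(t,x))$ and $\|\mathbf{S}^v_\tau(u_0)\|_{L^\infty}=\|u_0\|_{L^\infty}$ for every $\tau$, the flow displacement satisfies $|\phi(t,x)-x|\leq\int_0^t\|v(\tau)\|^k_{L^\infty}\,\dd\tau\leq t\|u_0\|^k_{L^\infty}$, and the same bound holds for the inverse flow $\psi(t,\cdot):=\phi(t,\cdot)^{-1}$ (apply the previous bound at the preimage point). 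Second, under the present hypotheses $\|u_0\|^{k-1}_{L^\infty}\|\pa_xu_0\|_{L^\infty}\ll1$, so the lifespan $T_2$ of Lemma \ref{lm0} equals $1$; hence $\|\mathbf{S}^v_t(u_0)\|_{L^\infty}=\|u_0\|_{L^\infty}$, $\|\pa_x\mathbf{S}^v_t(u_0)\|_{L^\infty}\leq2\|\pa_xu_0\|_{L^\infty}\les1$ and $\|\pa_x^2\mathbf{S}^v_t(u_0)\|_{L^\infty}\les\|\pa_x^2u_0\|_{L^\infty}+1$ are available for all $t\in(0,1]$.

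The core device is, to estimate $\pa_x^j(v(t)-u_0)$ at a point $x$, to set $y=\psi(t,x)$ so that $\phi(t,y)=x$ and split
\bbal
\pa_x^jv(t,x)-\pa_x^ju_0(x)=\underbrace{\big(\pa_x^jv(t,\phi(t,y))-\pa_x^ju_0(y)\big)}_{\text{along-flow part}}+\underbrace{\big(\pa_x^ju_0(y)-\pa_x^ju_0(x)\big)}_{\text{displacement part}}.
\end{align*}
The displacement part is controlled by the mean value theorem and the flow bound, $|\pa_x^ju_0(y)-\pa_x^ju_0(x)|\leq\|\pa_x^{j+1}u_0\|_{L^\infty}|y-x|\leq t\|u_0\|^k_{L^\infty}\|\pa_x^{j+1}u_0\|_{L^\infty}$. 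The along-flow part is evaluated by integrating the characteristic ODEs: $\tfrac{\dd}{\dd t}v(t,\phi)=0$, $\tfrac{\dd}{\dd t}v_x(t,\phi)=-kv^{k-1}v_x^2(t,\phi)$, and the identity obtained by differentiating \eqref{u1} once more, in which the top-order term $v^k\pa_x^3v$ cancels against the transport term, leaving $\tfrac{\dd}{\dd t}v_{xx}(t,\phi)=-\big(3kv^{k-1}v_xv_{xx}+k(k-1)v^{k-2}v_x^3\big)(t,\phi)$. These give along-flow parts equal to $0$, $-k\int_0^tv^{k-1}v_x^2\,\dd\tau$ and $-\int_0^t(3kv^{k-1}v_xv_{xx}+k(k-1)v^{k-2}v_x^3)\,\dd\tau$ respectively; crucially, none of these integrands involves a derivative of order higher than the bounds of Lemma \ref{lm0} supply.

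It then remains to sum the two parts in each case. For \eqref{y1} ($j=0$) the along-flow part vanishes and the displacement part gives $t\|u_0\|^k_{L^\infty}\|\pa_xu_0\|_{L^\infty}\les t\|u_0\|^k_{L^\infty}$. For \eqref{y2} ($j=1$) the displacement part gives $t\|u_0\|^k_{L^\infty}\|\pa_x^2u_0\|_{L^\infty}$ and the along-flow integral is $\les t\|u_0\|^{k-1}_{L^\infty}\|\pa_xu_0\|^2_{L^\infty}\les t\|u_0\|^{k-1}_{L^\infty}$. For \eqref{y3} ($j=2$) the displacement part gives exactly $t\|u_0\|^k_{L^\infty}\|\pa_x^3u_0\|_{L^\infty}$, while inserting $\|v\|_{L^\infty}=\|u_0\|_{L^\infty}\ll1$, $\|v_x\|_{L^\infty}\les1$ and $\|v_{xx}\|_{L^\infty}\les\|\pa_x^2u_0\|_{L^\infty}+1$ bounds the along-flow integral by $t(\|\pa_x^2u_0\|_{L^\infty}+1)\les t\|\pa_x^2u_0\|_{L^\infty}$ (for $k=1$ the term $v^{k-2}v_x^3$ has coefficient $k(k-1)=0$, so no negative power of $\|u_0\|_{L^\infty}$ arises). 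The verification that each product is dominated is pure bookkeeping using $\|u_0\|_{L^\infty}\ll\|\pa_xu_0\|_{L^\infty}\les1\ll\|\pa_x^2u_0\|_{L^\infty}$.

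The main obstacle, and the very reason for working along the flow rather than from the fixed-$x$ identity $\pa_x^2v(t,x)-\pa_x^2u_0(x)=-\int_0^t(v^k\pa_x^3v+\cdots)\,\dd\tau$, is to avoid the top-order term $v^k\pa_x^3v$. A fixed-point expansion of \eqref{y3} would require an a priori control of $\|\pa_x^3\mathbf{S}^v_t(u_0)\|_{L^\infty}$, whose propagation feeds back the large quantity $\|\pa_x^2u_0\|^2_{L^\infty}$ and need not close against the right-hand side of \eqref{y3} under the abstract hypotheses. The along-flow cancellation removes this term at no cost, and the third derivative of the data re-enters only through the benign displacement part, already carrying the small prefactor $t\|u_0\|^k_{L^\infty}$.
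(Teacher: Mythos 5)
Your proposal is correct, but it takes a genuinely different route from the paper's. The paper proves all three bounds from the fixed-point Newton--Leibniz identity $\mathbf{S}^v_{t}(u_0)-u_0=-\int_0^t[\mathbf{S}^v_\tau(u_0)]^k\pa_x\mathbf{S}^v_\tau(u_0)\,\dd\tau$, differentiating under the integral in $x$ and inserting the uniform bounds of Lemma \ref{lm0}; in particular, for \eqref{y3} it must control $\big\|\pa_x^2\big([\mathbf{S}^v_\tau(u_0)]^k\pa_x\mathbf{S}^v_\tau(u_0)\big)\big\|_{L^\infty}$, whose top-order piece $[\mathbf{S}^v_\tau(u_0)]^k\pa_x^3\mathbf{S}^v_\tau(u_0)$ involves the \emph{third} derivative of the solution, so the paper implicitly relies on an unstated extension of Lemma \ref{lm0} propagating $\|\pa_x^3\mathbf{S}^v_\tau(u_0)\|_{L^\infty}$ in terms of $\|\pa_x^3u_0\|_{L^\infty}$. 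Your along-the-flow decomposition removes exactly this need: the transport cancellation in $\frac{\dd}{\dd t}v_{xx}(t,\phi)=(v_{txx}+v^kv_{xxx})(t,\phi)$ eliminates $v_{xxx}$ from the dynamics, and $\pa_x^3u_0$ re-enters only through the mean-value/displacement term, automatically with the prefactor $t\|u_0\|^k_{L^\infty}$; your identity for $\frac{\dd}{\dd t}v_{xx}$ along characteristics checks out, as do the ensuing size estimates under the standing hypotheses. What the paper's approach buys is brevity (no flow inversion, no splitting); what yours buys is a self-contained proof of \eqref{y3} using only the bounds actually stated in Lemma \ref{lm0}, plus a transparent explanation of why the third derivative of the data appears only in the benign position. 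Two minor points to tidy: you should record that $\phi(t,\cdot)$ is a homeomorphism of $\R$ (so the preimage point $y=\psi(t,x)$ exists), which is standard for a bounded $C^1$ velocity field; and your claim $\|u_0\|^{k-1}_{L^\infty}\|\pa_xu_0\|_{L^\infty}\ll1$, hence $T_2=1$, is valid only for $k\geq2$ (for $k=1$ one gets merely $\les1$, hence $T_2\gtrsim1$) --- a blemish the paper shares, since it states the lemma on $(0,1]$ while Lemma \ref{lm0} guarantees its bounds only on $(0,T_2]$, and it is harmless because the lemma is applied only for $k\geq3$ with $\|f_n\|_{L^\infty}\to0$.
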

\begin{proof} By Lemma \ref{lm0}, we know that the solution map $\mathbf{S}^v_{t}(u_0)\in \mathcal{C}([0,T_2];C^1)$ and has common lifespan $T_2$. Moreover, there holds
\bbal
&\|\mathbf{S}_{t}^v(u_0)\|_{L^\infty_{T_2}(L^\infty)}=\|u_0\|_{L^\infty}\quad\text{and}\quad\|\pa_x\mathbf{S}_{t}^v(u_0)\|_{L^\infty_{T_2}(L^\infty)}\les1 .
\end{align*}
By the Newton-Leibniz formula, we have
\bal\label{hy}
\mathbf{S}^v_{t}(u_0)-u_0=-\int^t_0[\mathbf{S}^v_\tau(u_0)]^k\pa_x\mathbf{S}^v_\tau(u_0)\dd\tau,
\end{align}
which gives that
\bal\label{hy1}
\|\mathbf{S}^v_{t}(u_0)-u_0\|_{L^\infty}&\leq \int^t_0\|\mathbf{S}^v_\tau(u_0)\|^k_{L^\infty}\|\pa_x\mathbf{S}^v_\tau(u_0)\|_{L^\infty}\dd \tau
\les t\|u_0\|^k_{L^{\infty}}.
\end{align}
Setting $g=\pa_{x}\left(\mathbf{S}^v_{t}(u_0)-u_0\right)$, then from \eqref{hy}, we deduce
\begin{align*}
\|g(t)\|_{L^\infty}&\leq\int^t_0\|\mathbf{S}^v_\tau(u_0)\|^k_{L^\infty}\|\pa_x^2\mathbf{S}^v_\tau(u_0)\|_{L^\infty}\dd \tau
+k\int^t_0\|\mathbf{S}^v_\tau(u_0)\|^{k-1}_{L^\infty}\|\pa_x\mathbf{S}^v_\tau(u_0)\|^2_{L^\infty}\dd \tau\\&
\les t\f(\|u_0\|^k_{L^{\infty}}\|\pa_x^2u_0\|_{L^{\infty}}+\|u_0\|^{k-1}_{L^{\infty}}\g).
\end{align*}
Setting $h=\pa_{x}^2\left(\mathbf{S}^v_{t}(u_0)-u_0\right)$, then from \eqref{hy}, we have
\begin{align*}
\|h(t)\|_{L^\infty}&\leq \int_0^t\left\|\pa_x^2\f([\mathbf{S}^v_\tau(u_0)]^k\pa_x\mathbf{S}^v_\tau(u_0)\g)\right\|_{L^\infty}\dd \tau\nonumber\\
&\leq Ct\left(1+\|\pa_{x}^2u_0\|_{L^\infty}+\|u_0\|^k_{L^\infty}\|\pa_{x}^3u_0\|_{L^\infty}\right),
\end{align*}
which implies \eqref{y3}. This completes the proof of Lemma \ref{lm2}.
\end{proof}
To obtain the non-uniformly continuous property of data-to-solution map for Eq.\eqref{B}, we need to prove the crucial proposition.
\begin{lemma}\label{lm3}
Assume that $u_0\in \mathcal{S}$ with $\|u_0\|_{L^\infty}\ll \|\pa_xu_0\|_{L^\infty}\lesssim 1\ll\|\pa^2_xu_0\|_{L^\infty}$. Then the data-to-solution map $u_0\mapsto \mathbf{S}^v_t(u_0)$ of the Cauchy problem \eqref{B} satisfies that for $0<t\ll1$
\bbal
\left\|\mathbf{S}^v_{t}(u_0)-u_0+tu_0^k\pa_x u_0\right\|_{C^1}&\les \f(\|u_0\|^k_{L^\infty}\|\pa_{x}^2u_0\|_{L^\infty}+\|u_0\|^{2k}_{L^{\infty}} \|\pa_{x}^3u_0\|_{L^{\infty}}\g)t^2.
\end{align*}
\end{lemma}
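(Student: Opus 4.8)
The plan is to reduce the claim to a first-order Taylor expansion in time around $t=0$. Since $v=\mathbf{S}^v_t(u_0)$ solves $\pa_tv=-v^k\pa_xv$, the Newton--Leibniz identity \eqref{hy} gives $\mathbf{S}^v_t(u_0)-u_0=-\int_0^t[\mathbf{S}^v_\tau(u_0)]^k\pa_x\mathbf{S}^v_\tau(u_0)\dd\tau$, and adding $tu_0^k\pa_xu_0=\int_0^tu_0^k\pa_xu_0\,\dd\tau$ yields the exact identity
\[
\mathbf{S}^v_t(u_0)-u_0+tu_0^k\pa_xu_0=\int_0^t\Big(u_0^k\pa_xu_0-[\mathbf{S}^v_\tau(u_0)]^k\pa_x\mathbf{S}^v_\tau(u_0)\Big)\dd\tau.
\]
Thus it suffices to bound the $C^1$-norm of the integrand $u_0^k\pa_xu_0-v^k\pa_xv$ by $\tau$ times the asserted right-hand side, since integrating a single factor $\tau$ over $[0,t]$ produces the desired $t^2$. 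The key structural point is that the integrand is the difference of the nonlinearity evaluated at $u_0$ and at $v(\tau)$, and by Lemma \ref{lm2} each difference $\mathbf{S}^v_\tau(u_0)-u_0$, together with its first two derivatives, is already $O(\tau)$; hence the whole integrand carries one explicit power of $\tau$.

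First I would control the $L^\infty$ part. Writing $u_0^k\pa_xu_0-v^k\pa_xv=u_0^k\pa_x(u_0-v)+(u_0^k-v^k)\pa_xv$ and using the elementary bound $\|u_0^k-v^k\|_{L^\infty}\lesssim\|u_0\|_{L^\infty}^{k-1}\|u_0-v\|_{L^\infty}$ (valid since $\|v\|_{L^\infty}=\|u_0\|_{L^\infty}$ by Lemma \ref{lm0}), together with \eqref{y1}--\eqref{y2} and $\|\pa_xv\|_{L^\infty}\lesssim\|\pa_xu_0\|_{L^\infty}\lesssim1$, gives a contribution of the form $\tau(\|u_0\|_{L^\infty}^{2k-1}+\|u_0\|_{L^\infty}^{2k}\|\pa_x^2u_0\|_{L^\infty})$, which is absorbed into the stated bound using $\|u_0\|_{L^\infty}^{k-1}\lesssim1\ll\|\pa_x^2u_0\|_{L^\infty}$.

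Next comes the $C^1$ part, which is the heart of the matter. Differentiating, $\pa_x(u_0^k\pa_xu_0-v^k\pa_xv)=k\big(u_0^{k-1}(\pa_xu_0)^2-v^{k-1}(\pa_xv)^2\big)+\big(u_0^k\pa_x^2u_0-v^k\pa_x^2v\big)$. I would telescope each group: $u_0^k\pa_x^2u_0-v^k\pa_x^2v=u_0^k\pa_x^2(u_0-v)+(u_0^k-v^k)\pa_x^2v$, and for the first group write $(\pa_xu_0)^2-(\pa_xv)^2=(\pa_xu_0+\pa_xv)\pa_x(u_0-v)$ and $\|u_0^{k-1}-v^{k-1}\|_{L^\infty}\lesssim\|u_0\|_{L^\infty}^{k-2}\|u_0-v\|_{L^\infty}$. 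The single dominant contribution is $u_0^k\pa_x^2(u_0-v)$: by \eqref{y3} one has $\|\pa_x^2(u_0-v)\|_{L^\infty}\lesssim\tau(\|\pa_x^2u_0\|_{L^\infty}+\|u_0\|_{L^\infty}^k\|\pa_x^3u_0\|_{L^\infty})$, so this term contributes precisely $\tau(\|u_0\|_{L^\infty}^k\|\pa_x^2u_0\|_{L^\infty}+\|u_0\|_{L^\infty}^{2k}\|\pa_x^3u_0\|_{L^\infty})$, matching the claimed bound after integration.

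The main obstacle is then purely bookkeeping: showing every remaining term is dominated by these two. Using Lemma \ref{lm0} ($\|\pa_x^2v\|_{L^\infty}\lesssim\|\pa_x^2u_0\|_{L^\infty}$, $\|\pa_xv\|_{L^\infty}\lesssim1$) together with \eqref{y1}--\eqref{y2}, the leftover terms are of the schematic form $\tau\|u_0\|_{L^\infty}^{2k-1}\|\pa_x^2u_0\|_{L^\infty}$ and $\tau\|u_0\|_{L^\infty}^{2k-2}$ (times harmless factors $\lesssim1$). The first is $\|u_0\|_{L^\infty}^{k-1}\lesssim1$ times the dominant term and causes no trouble. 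The delicate one is $\|u_0\|_{L^\infty}^{2k-2}$: for $k\geq2$ it equals $\|u_0\|_{L^\infty}^{k-2}\cdot\|u_0\|_{L^\infty}^k\lesssim\|u_0\|_{L^\infty}^k\ll\|u_0\|_{L^\infty}^k\|\pa_x^2u_0\|_{L^\infty}$, but for $k=1$ it degenerates to a constant and must be absorbed via the interpolation inequality $\|\pa_xu_0\|_{L^\infty}^2\lesssim\|u_0\|_{L^\infty}\|\pa_x^2u_0\|_{L^\infty}$ (Landau--Kolmogorov), which in the relevant regime $\|\pa_xu_0\|_{L^\infty}\approx1$ yields $1\lesssim\|u_0\|_{L^\infty}\|\pa_x^2u_0\|_{L^\infty}$; this is the only point where a lower bound on $\|\pa_xu_0\|_{L^\infty}$ enters. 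Assembling all pieces and integrating the factor $\tau$ over $[0,t]$ gives the stated $t^2$ bound.
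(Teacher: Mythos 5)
Your proposal is correct and is essentially the paper's own argument: both start from the exact identity $\mathbf{S}^v_t(u_0)-u_0+tu_0^k\pa_xu_0=\int_0^t\big(u_0^k\pa_xu_0-[\mathbf{S}^v_\tau(u_0)]^k\pa_x\mathbf{S}^v_\tau(u_0)\big)\dd\tau$ and then bound the integrand in $C^1$ by $\tau$ times the stated right-hand side using Lemmas \ref{lm0} and \ref{lm2}, the only cosmetic difference being that the paper rewrites the integrand as $\frac{1}{k+1}\pa_x\big([\mathbf{S}^v_\tau(u_0)]^{k+1}-u_0^{k+1}\big)$ and factors out $\mathbf{S}^v_\tau(u_0)-u_0$, whereas you apply the product rule and telescope directly, which yields the same three types of terms. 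If anything, your bookkeeping is slightly sharper: the paper's estimate \eqref{su1} discards the factor of size $\|u_0\|_{L^\infty}^{k-1}$ multiplying $\pa_x\big(\mathbf{S}^v_\tau(u_0)-u_0\big)$, leaving a bare term $\tau\|u_0\|_{L^\infty}^{k-1}$ whose absorption into the claimed bound silently uses $1\lesssim\|u_0\|_{L^\infty}\|\pa_x^2u_0\|_{L^\infty}$ (true for the initial data to which the lemma is actually applied, with $k\geq3$), and this is precisely the borderline issue you isolate and, for $k=1$, handle via the Landau--Kolmogorov inequality.
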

\begin{proof} From \eqref{B}, we have
\begin{align}\label{su}
\left\|\mathbf{S}^v_{t}(u_0)-u_0+tu_0^k\pa_x u_0\right\|_{C^1}
\leq&~ \int^t_0\left\|\partial_\tau \mathbf{S}^v_{\tau}(u_0)+u_0^k\pa_x u_0\right\|_{C^1} \dd \tau\nonumber\\
\leq&~ \int^t_0\f\|[\mathbf{S}^v_{\tau}(u_0)]^k\partial_x\mathbf{S}^v_{\tau}(u_0)-u^k_0\partial_xu_0\g\|_{C^1}\dd \tau.
\end{align}
By the binomial theorem, one has
$$[\mathbf{S}^v_{\tau}(u_0)]^{k+1}-u_0^{k+1}=\f(\mathbf{S}^v_{\tau}(u_0)-u_0\g)\cdot\mathbf{F}\f(\mathbf{S}^v_{\tau}(u_0),u_0\g),$$
where
$$\mathbf{F}\f(\mathbf{S}^v_{\tau}(u_0),u_0\g):=[\mathbf{S}^v_{\tau}(u_0)]^{k}+[\mathbf{S}^v_{\tau}(u_0)]^{k-1}\cdot u_0+\cdots+\mathbf{S}^v_{\tau}(u_0)\cdot u_0^{k-1}+u_0^{k}.$$
Thus we have
\begin{align}\label{su1}
&\Big\|[\mathbf{S}^v_{\tau}(u_0)]^k\partial_x\mathbf{S}^v_{\tau}(u_0)-u^k_0\partial_xu_0\Big\|_{C^1}\nonumber\\
=&~\frac{1}{k+1}\f\|\pa_x\f([\mathbf{S}^v_{\tau}(u_0)]^{k+1}-u^{k+1}_0\g)\g\|_{C^1}\nonumber\\
 \les&~\frac{1}{k+1}\left\{\f\|\mathbf{S}^v_{\tau}(u_0)-u_0\g\|_{L^\infty}\f(\f\|\pa_x\mathbf{F}\f(\mathbf{S}^v_{\tau}(u_0),u_0\g)\g\|_{L^\infty}
 +\f\|\pa_x^2\mathbf{F}\f(\mathbf{S}^v_{\tau}(u_0),u_0\g)\g\|_{L^\infty}\g)\right.\nonumber\\
 &\quad+\f\|\pa_x\f(\mathbf{S}^v_{\tau}(u_0)-u_0\g)\g\|_{L^\infty}\f(\f\|\mathbf{F}\f(\mathbf{S}^v_{\tau}(u_0),u_0\g)\g\|_{L^\infty}+
 \f\|\pa_x\mathbf{F}\f(\mathbf{S}^v_{\tau}(u_0),u_0\g)\g\|_{L^\infty}\g)\nonumber\\
 &\quad\left.+\f\|\pa_x^2\f(\mathbf{S}^v_{\tau}(u_0)-u_0\g)\g\|_{L^\infty}\f\|\mathbf{F}\f(\mathbf{S}^v_{\tau}(u_0),u_0\g)\g\|_{L^\infty}\g\}\nonumber\\
 \lesssim&~\f\|\mathbf{S}^v_{\tau}(u_0)-u_0\g\|_{L^\infty}\|\pa_x^2u_0\|_{L^\infty}+\f\|\pa_x\f(\mathbf{S}^v_{\tau}(u_0)-u_0\g)\g\|_{L^\infty}\nonumber\\
 &\quad
+\f\|\pa_x^2\f(\mathbf{S}^v_{\tau}(u_0)-u_0\g)\g\|_{L^\infty}\f\|u_0\g\|^{k}_{L^\infty}.
   \end{align}
Inserting \eqref{su1} into \eqref{su} and using Lemma \ref{lm2}, we complete the proof of Lemma \ref{lm3}.
\end{proof}

\section{Proof of Theorem \ref{th1}: $k\geq3$}\label{sec3}
 In this section, we prove Theorem \ref{th1} holds for the case $k\geq3$.

\subsection{Non-uniform continuous of approximate solution}\label{subs3}
We present the proposition involving the non-uniformly continuous property of data-to-solution map for Eq.\eqref{B}.
\begin{proposition}\label{pro1}
Let $f_n$ and $g_n$ be given in Lemma \ref{lm1}. Then the difference between the data-to-solution maps $f_n+g_n\mapsto \mathbf{S}^v_t(f_n+g_n)$ and  $f_n\mapsto \mathbf{S}^v_t(f_n)$ of the Cauchy problem \eqref{B} satisfies that for $0<t\ll1$, and for some positive constant $c_0$
\bbal
\liminf_{n\rightarrow \infty}\left\|\mathbf{S}^v_{t}(f_n+g_n)-\mathbf{S}^v_{t}(f_n)\right\|_{C^1}\geq  c_0t.
\end{align*}
\end{proposition}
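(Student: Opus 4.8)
The plan is to build the proof directly on the algebraic decomposition recorded in the introductory remark. Writing $u_0^n=f_n+g_n$, I split
\bbal
\mathbf{S}^v_{t}(f_n+g_n)-\mathbf{S}^v_{t}(f_n)=\mathbf{I}_1(u^n_0)+\mathbf{I}_2(f_n)+g_n-t(f_n+g_n)^k\pa_xu^n_{0},
\end{align*}
where $\mathbf{I}_1(u^n_0)=\mathbf{S}^v_{t}(u^n_0)-u^n_0+t(u^n_0)^k\pa_xu^n_0$ and $\mathbf{I}_2(f_n)=f_n-\mathbf{S}^v_{t}(f_n)$; the identity is checked by cancelling $-u^n_0+f_n+g_n=0$ together with the two copies of $t(u^n_0)^k\pa_xu^n_0$. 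By the reverse triangle inequality it then suffices to bound the \textbf{Leading term} $t(f_n+g_n)^k\pa_xu^n_0$ from below by $\gtrsim c_0t$ uniformly in large $n$, and to show the three remaining terms are negligible relative to $c_0t$ as $n\to\infty$ and $t\to0$.

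For the lower bound I use $\|\cdot\|_{C^1}\geq\|\pa_x\,\cdot\|_{L^\infty}\geq|\pa_x\,\cdot\,(0)|$ and expand
\bbal
\pa_x\big[(f_n+g_n)^k\pa_x(f_n+g_n)\big]=(f_n+g_n)^k\pa_x^2f_n+(f_n+g_n)^k\pa_x^2g_n+k(f_n+g_n)^{k-1}[\pa_x(f_n+g_n)]^2.
\end{align*}
At $x=0$ the first summand is $\geq c_0$ by \eqref{m3}, while the other two carry factors $2^{-n/k}$ and $2^{-n(k-1)/k}$ and hence vanish as $n\to\infty$ by \eqref{m1}--\eqref{m2}. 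Thus $\|(f_n+g_n)^k\pa_xu^n_0\|_{C^1}\geq c_0-o(1)$.

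The three error terms are controlled by the preparatory lemmas. For $\mathbf{I}_1(u^n_0)$ I apply Lemma \ref{lm3} with $u_0=u^n_0$; its hypothesis $\|u^n_0\|_{L^\infty}\ll\|\pa_xu^n_0\|_{L^\infty}\les1\ll\|\pa^2_xu^n_0\|_{L^\infty}$ holds for $n\gg1$ since the scales are $\|u^n_0\|_{L^\infty}\approx2^{-n/k}$, $\|\pa_xu^n_0\|_{L^\infty}\approx1$, $\|\pa_x^2u^n_0\|_{L^\infty}\approx2^n$. Counting scales ($\|u^n_0\|^k_{L^\infty}\approx2^{-n}$ and $\|\pa_x^3u^n_0\|_{L^\infty}\approx2^{2n}$) makes both products in Lemma \ref{lm3} of size $\approx1$, so $\|\mathbf{I}_1(u^n_0)\|_{C^1}\les t^2$ uniformly in $n$. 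For $\mathbf{I}_2(f_n)=-(\mathbf{S}^v_t(f_n)-f_n)$ I apply Lemma \ref{lm2} with $u_0=f_n$, which gives $\|\mathbf{I}_2(f_n)\|_{C^1}\les t\,2^{-n(k-1)}\to0$ (here $k\geq3$ is used). Finally $\|g_n\|_{C^1}\les2^{-n/k}\to0$ by \eqref{m1}.

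Combining the estimates yields
\bbal
\|\mathbf{S}^v_{t}(f_n+g_n)-\mathbf{S}^v_{t}(f_n)\|_{C^1}\geq t\big(c_0-o(1)\big)-Ct^2-Ct\,2^{-n(k-1)}-C2^{-n/k};
\end{align*}
taking $\liminf_{n\to\infty}$ kills every $n$-dependent error and leaves $\geq c_0t-Ct^2$, after which choosing $0<t\ll1$ with $Ct\leq c_0/2$ produces the claimed bound (renaming the constant). I expect the main obstacle to lie not in any single estimate but in the careful bookkeeping of the four competing scales $2^{-n},2^{-n/k},2^{n},2^{2n}$: one must verify that the high-frequency second derivative $\pa_x^2f_n$ (size $2^n$), after multiplication by the small amplitude $(f_n+g_n)^k$ (size $2^{-n}$), survives as an $O(1)$ quantity in the leading term, while every error term is forced strictly below this threshold. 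This balance is exactly what the construction of $f_n,g_n$ in Lemma \ref{lm1} is engineered to guarantee.
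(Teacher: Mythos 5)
Your proof is correct and essentially identical to the paper's: the same decomposition isolating the leading term $t(f_n+g_n)^k\pa_xu^n_0$ against the errors $\mathbf{I}_1(u^n_0)$, $\mathbf{I}_2(f_n)$, $g_n$, the same applications of Lemmas \ref{lm1}, \ref{lm2} and \ref{lm3} with the same scale counting ($\|u^n_0\|_{L^\infty}^k\approx 2^{-n}$ against $\|\pa_x^2u^n_0\|_{L^\infty}\approx 2^n$, etc.), and the same final absorption of the $Ct^2$ error by taking $t$ small. The only cosmetic deviation is that you extract the leading-term lower bound by evaluating the derivative at $x=0$ rather than via the paper's triangle-and-product estimates in $C^1$ (and there you should write $\f|(f_n+g_n)^k\pa_x^2f_n\g|(x=0)\geq c_0$, since that summand is in fact negative at $x=0$; also, your step for $\mathbf{I}_2(f_n)$ only needs $k\geq 2$, the restriction $k\geq 3$ being required elsewhere, in Proposition \ref{pro2}) --- both routes rest on \eqref{m3} and yield $c_0-o(1)$.
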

\begin{proof}
Let $u^n_0:=f_n+g_n$. Notice that
\bbal
&\mathbf{S}^v_{t}(u^n_0)=\mathbf{S}^v_{t}(u^n_0)-u^n_0+t(u^n_0)^k\pa_xu^n_0+f_n+g_n-t(f_n+g_n)^k\pa_x(f_n+g_n),
\end{align*}
using the triangle inequality, we deduce that
\bal\label{h0}
\|\mathbf{S}^v_{t}(f_n+g_n)-\mathbf{S}^v_{t}(f_n)\|_{C^1}
&\geq t\f\|(f_n+g_n)^k\pa_x(f_n+g_n)\g\|_{C^1}-\|g_n\|_{C^1}-\f\|\mathbf{S}^v_{t}(f_n)-f_n\g\|_{C^1}\nonumber\\
&\quad-\f\|\mathbf{S}^v_{t}(u^n_0)-u^n_0+t(u^n_0)^k\pa_xu^n_0\g\|_{C^1}.
\end{align}
It is not difficult to find that
\begin{align}\label{yyy1}
\f\|(f_n+g_n)^k\partial_x(f_n+g_n)\g\|_{C^1}&\geq \f\|(f_n+g_n)^k\partial_xf_n\g\|_{C^1}
-\f\|(f_n+g_n)^k\partial_xg_n\g\|_{C^1}\nonumber\\
&\geq c_0-\f\|(f_n+g_n)^k\g\|_{C^1}\f(\f\|\partial_xf_n\g\|_{L^\infty}+\f\|\partial_xg_n\g\|_{C^1}\g)\nonumber\\
&\geq c_0
-C2^{-\frac {(k-1)n}k},
\end{align}
where we have used Lemma \ref{lm1} and
\bbal
& \f\|(f_n+g_n)^k\g\|_{C^1}\les\f\|(f_n+g_n)^k\g\|_{L^\infty}+\f\|\pa_x\f((f_n+g_n)^k\g)\g\|_{L^\infty}\les2^{-\frac {(k-1)n}k}.
\end{align*}
Using Lemma  \ref{lm2} with $u_0=f_n$, and by Lemma \ref{lm1}, we obtain
\bal\label{yy2}
\f\|\mathbf{S}^v_{t}(f_n)-f_n\g\|_{C^1}\les t2^{-(k-1)n}.
\end{align}
Using Lemma  \ref{lm3} with $u_0=f_n+g_n$, and by Lemma \ref{lm1}, we obtain
\bal\label{yy3}
\f\|\mathbf{S}^v_{t}(u^n_0)-u^n_0+t(u^n_0)^k\pa_xu^n_0\g\|_{C^1}\les t^2.
\end{align}
Inserting the above \eqref{yyy1}-\eqref{yy3} into \eqref{h0}, we deduce  that
\bbal
\liminf_{n\rightarrow \infty}\|\mathbf{S}^v_t(f_n+g_n)-\mathbf{S}^v_t(f_n)\|_{C^1}\gtrsim t\quad\text{for} \ t \ \text{small enough}.
\end{align*}
This completes the proof of Proposition \ref{pro1}.
\end{proof}
\subsection{Estimation of the $C^1$-norm of Errors}\label{subs4}
To obtain the non-uniformly continuous property for the actual solution, we need to estimate the $C^1$-norm of the difference between approximate and actual solution with the same initial data.
\begin{proposition}\label{pro2}
Assume that $u_0\in \mathcal{S}$ with $\|u_0\|_{L^\infty}\ll \|\pa_xu_0\|_{L^\infty}\lesssim 1\ll\|\pa^2_xu_0\|_{L^\infty}$. Then the difference between actual $\mathbf{S}^u_t(u_0)$ of the Cauchy problem \eqref{gch0}-\eqref{gch1} and  approximate solution $\mathbf{S}^v_t(u_0)$ of the Cauchy problem \eqref{B}, satisfies that for $0<t\ll1$
\bbal
\left\|\mathbf{S}^u_{t}(u_0)-\mathbf{S}^v_{t}(u_0)\right\|_{C^1}&\les t^2\left\|u_0\right\|_{L^{\infty}}^{k}\left\|\pa_x^2u_0\right\|_{L^{\infty}}+\|u_0\|^{k-2}_{L^{\infty}}.
\end{align*}
\end{proposition}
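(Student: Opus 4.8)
The plan is to control the difference $w:=\mathbf{S}^u_t(u_0)-\mathbf{S}^v_t(u_0)$ directly through the transport equation it satisfies. Writing $u=\mathbf{S}^u_t(u_0)$ and $v=\mathbf{S}^v_t(u_0)$, subtracting \eqref{B} from \eqref{gch0}, and using the identity $u^k\pa_xu-v^k\pa_xv=u^k\pa_xw+(u^k-v^k)\pa_xv$ together with the factorization $u^k-v^k=w\,\mathbf{G}(u,v)$ with $\mathbf{G}(u,v)=\sum_{j=0}^{k-1}u^jv^{k-1-j}$, I obtain
\beq
\pa_tw+u^k\pa_xw=\mathbf{P}(u)-w\,\mathbf{G}(u,v)\,\pa_xv,\qquad w|_{t=0}=0.
\eeq
Before estimating, I would collect the a priori bounds feeding the argument. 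For the approximate solution Lemma \ref{lm0} already gives $\|v\|_{L^\infty}=\|u_0\|_{L^\infty}$, $\|\pa_xv\|_{L^\infty}\les1$ and $\|\pa_x^2v\|_{L^\infty}\les\|\pa_x^2u_0\|_{L^\infty}$. For the actual solution the crucial point --- and the only place where $k\ge3$ enters --- is to upgrade the rough bound $\|u\|_{C^1}\les\|u_0\|_{C^1}\les1$ into the purely $L^\infty$ estimate $\|u\|_{L^\infty}\les\|u_0\|_{L^\infty}$. Since $\pa_x(1-\pa_x^2)^{-1}$, $(1-\pa_x^2)^{-1}$ and $\pa_x^2(1-\pa_x^2)^{-1}=(1-\pa_x^2)^{-1}-\mathrm{Id}$ all have $L^1$ convolution kernels and are therefore bounded on $L^\infty$, formula \eqref{gch1} yields $\|\mathbf{P}(u)\|_{C^1}\les\|u\|^{k+1}_{L^\infty}+\|u\|^{k-1}_{L^\infty}\|\pa_xu\|^2_{L^\infty}+\|u\|^{k-2}_{L^\infty}\|\pa_xu\|^3_{L^\infty}$, with \emph{no} loss of derivatives. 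Reading $\frac{\dd}{\dd t}u=\mathbf{P}(u)$ along the characteristics of $u^k$ and using $\|\pa_xu\|_{L^\infty}\les1$ and $\|u\|_{L^\infty}\ll1$ then gives $\frac{\dd}{\dd t}\|u\|_{L^\infty}\les\|u\|_{L^\infty}^{k-2}\les\|u\|_{L^\infty}$ precisely when $k\ge3$, so Gronwall produces $\|u\|_{L^\infty}\les\|u_0\|_{L^\infty}$ on $t\in(0,1]$, whence $\|\mathbf{P}(u)\|_{C^1}\les\|u_0\|^{k-2}_{L^\infty}$.

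Second, I would run the $L^\infty$ estimate for $w$. Composing the equation with the flow $\psi$ of $u^k$, integrating in time from $w|_{t=0}=0$ and taking suprema gives $\|w(t)\|_{L^\infty}\le\int_0^t\big(\|\mathbf{P}(u)\|_{L^\infty}+\|\mathbf{G}\|_{L^\infty}\|\pa_xv\|_{L^\infty}\|w\|_{L^\infty}\big)\dd\tau$. Here $\|\mathbf{G}\|_{L^\infty}\les\|u_0\|^{k-1}_{L^\infty}\ll1$, so the last term is a Gronwall perturbation with a harmless coefficient, and I conclude $\|w(t)\|_{L^\infty}\les t\|u_0\|^{k-2}_{L^\infty}\les\|u_0\|^{k-2}_{L^\infty}$.

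Third, and this is where the main difficulty lies, I would estimate $\pa_xw$. Differentiating the equation for $w$ produces a transport equation for $\pa_xw$ whose source contains $\pa_x\mathbf{P}(u)$ (already controlled by $\|u_0\|^{k-2}_{L^\infty}$), the zeroth-order term $ku^{k-1}\pa_xu\,\pa_xw$ coming from differentiating the transport speed, and $\pa_x\big(w\,\mathbf{G}\,\pa_xv\big)=\pa_xw\,\mathbf{G}\pa_xv+w\pa_x\mathbf{G}\,\pa_xv+w\,\mathbf{G}\,\pa_x^2v$. The only dangerous contribution is $w\,\mathbf{G}\,\pa_x^2v$, since $\pa_x^2v$ is as large as $\|\pa_x^2u_0\|_{L^\infty}$. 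The key observation is that this largeness is defeated by smallness already in hand: using $\|w(\tau)\|_{L^\infty}\les\tau\|u_0\|^{k-2}_{L^\infty}$, $\|\mathbf{G}\|_{L^\infty}\les\|u_0\|^{k-1}_{L^\infty}$ and $\|\pa_x^2v\|_{L^\infty}\les\|\pa_x^2u_0\|_{L^\infty}$, the time integral of this piece is bounded by $t^2\|u_0\|^{2k-3}_{L^\infty}\|\pa_x^2u_0\|_{L^\infty}\le t^2\|u_0\|^{k}_{L^\infty}\|\pa_x^2u_0\|_{L^\infty}$, where $2k-3\ge k$ and $\|u_0\|_{L^\infty}\le1$ were used. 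The terms $\pa_xw\,\mathbf{G}\pa_xv$ and $ku^{k-1}\pa_xu\,\pa_xw$ enter the Gronwall coefficient with the small factor $\|u_0\|^{k-1}_{L^\infty}$, while $w\pa_x\mathbf{G}\,\pa_xv$ is of lower order. Collecting the $L^\infty$ and $\pa_x$ bounds then yields $\|w\|_{C^1}\les t^2\|u_0\|^k_{L^\infty}\|\pa_x^2u_0\|_{L^\infty}+\|u_0\|^{k-2}_{L^\infty}$, as claimed.

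The principal obstacle is exactly the term $w\,\mathbf{G}\,\pa_x^2v$ in the $\pa_xw$ equation, which threatens to reintroduce the large second derivative $\|\pa_x^2u_0\|_{L^\infty}$. Its resolution rests on two structural facts: first, that $\mathbf{P}$ loses no derivatives thanks to the smoothing of $(1-\pa_x^2)^{-1}$, which allows the whole argument to be run in $L^\infty$ and secures $\|u\|_{L^\infty}\les\|u_0\|_{L^\infty}$; and second, that the extra time integration converts the single dangerous factor $\|\pa_x^2u_0\|_{L^\infty}$ into the bounded quantity $t^2\|u_0\|^{2k-3}_{L^\infty}\|\pa_x^2u_0\|_{L^\infty}$ precisely when $k\ge3$. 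It is this delicate balance that forces the separate treatment of the cases $k=1,2$.
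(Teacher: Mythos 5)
Your proof is correct and follows essentially the same route as the paper: the identical transport equation for $w=\mathbf{S}^u_t(u_0)-\mathbf{S}^v_t(u_0)$ with the decomposition $u^k\pa_xu-v^k\pa_xv=u^k\pa_xw+(u^k-v^k)\pa_xv$, integration along the Lagrangian flow of $u^k$, the two-tier Gronwall argument ($L^\infty$ bound first, then the derivative bound), and the same mechanism for the dangerous term $(u^k-v^k)\pa_x^2v$, whose large factor $\|\pa_x^2u_0\|_{L^\infty}$ is defeated by the already-established smallness $\|w(\tau)\|_{L^\infty}\les \tau\|u_0\|_{L^\infty}^{k-2}$, yielding the $t^2\|u_0\|^{k}_{L^\infty}\|\pa_x^2u_0\|_{L^\infty}$ contribution (your factorization $u^k-v^k=w\,\mathbf{G}(u,v)$ versus the paper's binomial expansion $\sum_i C_k^i w^i v^{k-i}$ is an immaterial algebraic difference). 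The one substantive organizational difference is how the nonlinear terms in $\mathbf{P}(u)$ and in $u^k-v^k$ are controlled: you first prove the standalone a priori estimate $\|u\|_{L^\infty}\les\|u_0\|_{L^\infty}$ for the \emph{actual} solution by running Gronwall along its characteristics (this is where you invoke $k\ge3$, via $\|u\|^{k-2}_{L^\infty}\les\|u\|_{L^\infty}$), and then bound $\mathbf{P}(u)$ and $\mathbf{G}(u,v)$ directly by powers of $\|u_0\|_{L^\infty}$; the paper never proves this bound, instead writing $\|u^j\|_{L^\infty}\le\|u^j-v^j\|_{L^\infty}+\|v\|^j_{L^\infty}\les\|w\|_{L^\infty}+\|u_0\|^j_{L^\infty}$ and letting the Gronwall inequality for $w$ absorb the extra $\|w\|_{L^\infty}$ terms, with $k\ge3$ entering only through the power count $\|w\|^i_{L^\infty}\|u_0\|^{k-i}_{L^\infty}\les\tau^i\|u_0\|^{k+i(k-3)}_{L^\infty}\le\tau^i\|u_0\|^k_{L^\infty}$. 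Your variant is slightly more self-contained conceptually (it makes explicit the $L^\infty$ estimate that the introduction advertises as the key point for $k\ge3$), while the paper's variant avoids needing any separate a priori estimate on the actual solution beyond the rough $C^1$ bound from the well-posedness theory; both close the argument with the same constants and the same restriction $k\ge3$.
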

\begin{proof}
For the sake of simplicity, we set $\mathbf{w}:=\mathbf{S}^u_{t}(u_0)-\mathbf{S}^v_{t}(u_0)$. Then we have
\begin{equation}\label{BC0}
\begin{cases}
\partial_t\mathbf{w}+[\mathbf{S}^u_{t}(u_0)]^k\pa_x\mathbf{w}=-\f([\mathbf{S}^u_{t}(u_0)]^k-[\mathbf{S}^v_{t}(u_0)]^k\g)\pa_x\mathbf{S}^v_{t}(u_0)+\mathbf{P}(\mathbf{S}^u_{t}(u_0)),\\
\mathbf{w}(x,t=0)=0.
\end{cases}
\end{equation}
Given a $C^1$-solution $\mathbf{S}^u_{t}(u_0)$ of the Cauchy problem \eqref{gch0}-\eqref{gch1}, we may solve the following ODE to find the flow induced by $[\mathbf{S}^u_{t}(u_0)]^k$:
\begin{align}\label{ode1}
\quad\begin{cases}
\frac{\dd}{\dd t}\psi(t,x)=[\mathbf{S}^u_{t}(u_0)]^k(t,\psi(t,x)),\\
\psi(0,x)=x.
\end{cases}
\end{align}
Now we consider the equation \eqref{BC0} along the Lagrangian flow-map $\psi(t,x)$ associated to $[\mathbf{S}^u_{t}(u_0)]^k$. From \eqref{BC0} and \eqref{ode1}, it follows that
$$\mathbf{w}(t,\psi(t,x))=
-\int_0^t\f\{\f([\mathbf{S}^u_{\tau}(u_0)]^k-[\mathbf{S}^v_{\tau}(u_0)]^k\g)\pa_x\mathbf{S}^v_{\tau}(u_0)-\mathbf{P}(\mathbf{S}^u_{\tau}(u_0))\g\}(\tau,\psi(\tau,x))\dd \tau.$$
Notice that the $L^{\infty}$-norm of any function is preserved under the flow $\psi$ again, we have
\begin{align}\label{FY}
\left\|\mathbf{w}(t,x)\right\|_{L^{\infty}} &\leq\int_0^t\underbrace{\left\|\f([\mathbf{S}^u_{\tau}(u_0)]^k-[\mathbf{S}^v_{\tau}(u_0)]^k\g)\pa_x\mathbf{S}^v_{\tau}(u_0)\right\|_{L^{\infty}} }_{=:I_1} +\underbrace{\left\|\mathbf{P}(\mathbf{S}^u_{\tau}(u_0))\right\|_{L^{\infty}}}_{=:I_2} \dd \tau.
\end{align}
Note that $\mathbf{S}^u_{\tau}(u_0)=\mathbf{w}+\mathbf{S}^v_{\tau}(u_0)$ and by the binomial theorem, one has
\begin{align}\label{ly}
[\mathbf{S}^u_{\tau}(u_0)]^k-[\mathbf{S}^v_{\tau}(u_0)]^k=\sum_{i=1}^kC_k^i\mathbf{w}^i[\mathbf{S}^v_{\tau}(u_0)]^{k-i}.
\end{align}
The first term $I_1$ can be estimated as follows
\begin{align*}
I_1 &\les\left\|[\mathbf{S}^u_{\tau}(u_0)]^k-[\mathbf{S}^v_{\tau}(u_0)]^k\right\|_{L^{\infty}}\les\left\|\mathbf{w}(\tau,x)\right\|_{L^{\infty}}.
\end{align*}
Using the following estimate:
$$\left\|\partial_x\left(1-\partial^2_{x}\right)^{-1} f\right\|_{L^{\infty}}=\left\|\partial_xG*f\right\|_{L^{\infty}} \leq\|f\|_{L^{\infty}}\quad\text{where}\quad G(x)=\fr12e^{-|x|},
$$
we easily estimate the second term $I_2$ as follows
\begin{align*}
I_2 &\les\left\|[\mathbf{S}^u_{\tau}(u_0)]^{k+1}-[\mathbf{S}^v_{\tau}(u_0)]^{k+1}\right\|_{L^{\infty}}+\left\|\mathbf{S}^v_{\tau}(u_0)\right\|^{k+1}_{L^{\infty}}\\
&\quad+ \left\|[\mathbf{S}^u_{\tau}(u_0)]^{k-1}-[\mathbf{S}^v_{\tau}(u_0)]^{k-1}\right\|_{L^{\infty}}+\left\|\mathbf{S}^v_{\tau}(u_0)\right\|^{k-1}_{L^{\infty}}\\
&\quad+\left\|[\mathbf{S}^u_{\tau}(u_0)]^{k-2}-[\mathbf{S}^v_{\tau}(u_0)]^{k-2}\right\|_{L^{\infty}}+\left\|\mathbf{S}^v_{\tau}(u_0)\right\|^{k-2}_{L^{\infty}}\\
&\les\left\|\mathbf{w}(\tau,x)\right\|_{L^{\infty}}+\|u_0\|^{k-2}_{L^{\infty}}.
\end{align*}
Inserting the above into \eqref{FY} yields that
\begin{align*}
\left\|\mathbf{w}(t,x)\right\|_{L^{\infty}} &\les t\|u_0\|^{k-2}_{L^{\infty}}+\int_0^t\left\|\mathbf{w}(\tau,x)\right\|_{L^{\infty}}\dd \tau,
\end{align*}
and using Gronwall's inequality yields that
\begin{align}\label{w1}
\left\|\mathbf{w}(t,x)\right\|_{L^{\infty}} &\leq  Ct\|u_0\|^{k-2}_{L^{\infty}}.
\end{align}
Applying $\pa_x$ to \eqref{BC0} yields
\begin{equation*}
\begin{cases}
\partial_t\mathbf{w}_x+[\mathbf{S}^u_{t}(u_0)]^k\pa_x\mathbf{w}_x=-\f([\mathbf{S}^u_{t}(u_0)]^k-[\mathbf{S}^v_{t}(u_0)]^k\g)\pa_x^2\mathbf{S}^v_{t}(u_0)+\pa_x\mathbf{P}(\mathbf{S}^u_{t}(u_0))\\
\qquad-k[\mathbf{S}^u_{t}(u_0)]^{k-1}\pa_x\mathbf{S}^u_{t}(u_0)\pa_x\mathbf{w}-\pa_x\f([\mathbf{S}^u_{t}(u_0)]^k-[\mathbf{S}^v_{t}(u_0)]^k\g)\pa_x\mathbf{S}^v_{t}(u_0),\\
\mathbf{w}_x(x,t=0)=0.
\end{cases}
\end{equation*}
Similarly, we have
\begin{align}\label{BC1}
\left\|\mathbf{w}_x(t,x)\right\|_{L^{\infty}} &\leq\int_0^t\underbrace{\left\|\f([\mathbf{S}^u_{t}(u_0)]^k-[\mathbf{S}^v_{t}(u_0)]^k\g)\pa_x^2\mathbf{S}^v_{t}(u_0)\right\|_{L^{\infty}}}_{=:J_1}
+\underbrace{\left\|\pa_x\mathbf{P}(\mathbf{S}^u_{\tau}(u_0))\right\|_{L^{\infty}}}_{=:J_2} \dd \tau\nonumber\\
&\quad+\int_0^t\underbrace{\left\|k[\mathbf{S}^u_{t}(u_0)]^{k-1}\pa_x\mathbf{S}^u_{t}(u_0)\pa_x\mathbf{w}\right\|_{L^{\infty}}}_{=:J_3}\nonumber\\
&\quad+\underbrace{\left\|\pa_x\f([\mathbf{S}^u_{t}(u_0)]^k-[\mathbf{S}^v_{t}(u_0)]^k\g)\pa_x\mathbf{S}^v_{t}(u_0)\right\|_{L^{\infty}}}_{=:J_4} \dd \tau.
\end{align}
We estimate the above four terms as follows
\begin{align*}
J_1&\les\left\|[\mathbf{S}^u_{\tau}(u_0)]^k-[\mathbf{S}^v_{\tau}(u_0)]^k\right\|_{L^{\infty}}\left\|\pa_x^2u_0\right\|_{L^{\infty}}\quad \text{by} \;\eqref{ly}\\
&\les\sum_{i=1}^kC_k^i\left\|\mathbf{w}(\tau,x)\right\|_{L^{\infty}}^i\|u_0\|_{L^{\infty}}^{k-i}\left\|\pa_x^2u_0\right\|_{L^{\infty}}\quad \text{by} \;\eqref{w1}\\
&\les\sum_{i=1}^kC_k^i\tau^i\|u_0\|_{L^{\infty}}^{k}\left\|\pa_x^2u_0\right\|_{L^{\infty}}\les \tau\|u_0\|_{L^{\infty}}^{k}\left\|\pa_x^2u_0\right\|_{L^{\infty}},\\
J_2&\les\left\|\mathbf{w}(\tau,x)\right\|_{L^{\infty}}+\|u_0\|^{k-2}_{L^{\infty}}\les\|u_0\|^{k-2}_{L^{\infty}},\\
J_3&\les\left\|\mathbf{w}_x(\tau,x)\right\|_{L^{\infty}},\\
J_4&\les\left\|\mathbf{w}_x(\tau,x)\right\|_{L^{\infty}}+\left\|\mathbf{w}(\tau,x)\right\|_{L^{\infty}}\les\left\|\mathbf{w}_x(\tau,x)\right\|_{L^{\infty}}+\|u_0\|^{k-2}_{L^{\infty}}.
\end{align*}
Inserting the above into \eqref{BC1} yields that
\begin{align*}
\left\|\mathbf{w}_x(t,x)\right\|_{L^{\infty}} &\les
t^2\left\|u_0\right\|_{L^{\infty}}^{k}\left\|\pa_x^2u_0\right\|_{L^{\infty}}
+t\|u_0\|^{k-2}_{L^{\infty}}+\int_0^t\left\|\mathbf{w}_x(\tau,x)\right\|_{L^{\infty}}\dd \tau,
\end{align*}
and using Gronwall's inequality, yields that
\begin{align}\label{w2}
\left\|\mathbf{w}_x(t,x)\right\|_{L^{\infty}}  &\les t^2\left\|u_0\right\|_{L^{\infty}}^{k}\left\|\pa_x^2u_0\right\|_{L^{\infty}}+\|u_0\|^{k-2}_{L^{\infty}}.
\end{align}
Combining \eqref{w1} and \eqref{w2} gives the desired result of Proposition \ref{pro2}.
\end{proof}
\subsection{Non-uniform continuous of actual solution}\label{subs5}
With Propositions \ref{pro1}-\ref{pro2} in hand, we can prove Theorem \ref{th1}.\\
{\bf Behavior at time $t=0$.}\quad We set $u^n_0=f_n+g_n$. Obviously, we have
\bbal
\|u^n_0-f_n\|_{C^1}=\|g_n\|_{C^1}\leq C2^{-\frac{n}{k}}\quad \Rightarrow\quad
\lim_{n\to\infty}\|u^n_0-f_n\|_{C^1}=0.
\end{align*}
{\bf Behavior at time $t>0$.}\quad  We decompose the solution $\mathbf{S}^u_{t}(u^n_0)$ and $\mathbf{S}^u_{t}(f_n)$ as
\bbal
&\mathbf{S}^u_{t}(f_n+g_n)=\mathbf{S}^v_{t}(f_n+g_n)+w_1\quad\text{and}\nonumber\\
&\mathbf{S}^u_{t}(f_n)=\mathbf{S}^v_{t}(f_n)+w_2.
\end{align*}
Using Proposition \ref{pro2} with $u_0=f_n+g_n$ and $u_0=f_n$, respectively, and by Lemma \ref{lm1}, we obtain
\bbal
\|w_1\|_{C^1}\les t^2+2^{-\frac{(k-2)n}{k}}\quad\text{and}\quad\|w_2\|_{C^1}\les t^2+2^{-kn}.
\end{align*}
Using the triangle inequality and Proposition \ref{pro1}, we deduce that
\bbal
\liminf_{n\rightarrow \infty}\|\mathbf{S}^u_{t}(f_n+g_n)-\mathbf{S}^u_{t}(f_n)\|_{C^1}
=&~\liminf_{n\rightarrow \infty}\|\mathbf{S}^v_{t}(f_n+g_n)-\mathbf{S}^v_{t}(f_n)+w_1-w_2\|_{C^1}\nonumber\\
\geq&~\liminf_{n\rightarrow \infty}\|\mathbf{S}^v_{t}(f_n+g_n)-\mathbf{S}^v_{t}(f_n)\|_{C^1}-Ct^2\nonumber\\
\geq&~c_0 t-Ct^{2}\geq \frac{c_0}{2} t.
\end{align*}
This completes the proof of Theorem \ref{th1}.{\hfill $\square$}

\begin{remark} Following the same argument as above, we can prove Theorem \ref{th1} holds for the equation \eqref{gch0}-\eqref{gch1} with $k\geq3$ on the circle.
\end{remark}
\section{Proof of Theorem \ref{th1}: $k=1,2$}\label{sec4}
 In this section, we just prove Theorem \ref{th1} holds for the case $k=1$ since the case $k=2$ can be done with minor modification.
Firstly, we need to modify the high frequency function and establish the following crucial lemma.
\begin{lemma}\label{lmm1}
Let $n\gg1,\,\lambda\gg1$ and $\phi$ be given in Remark \ref{re5}. Define the high-low frequency functions $f_n$ and $g_n$ by
\bbal
&f_n=2^{-n}\phi(2^nx)\cos \left(\lambda2^nx\right),\\
& g_n=2^{-n}\phi(x).
\end{align*}
Then there exists a positive constant $C=C(\phi)$ which does not depend on $n$ such that
\bal
&\|g_n\|_{L^\infty}+\|\pa_xg_n\|_{L^\infty}+\|\pa_{x}^2g_n\|_{L^\infty}\leq C2^{-n},\label{mf1}\\
&\|f_n\|_{L^\infty}\leq C2^{-n},\quad\|\pa_x^kf_n\|_{L^\infty}\leq C2^{(k-1)n},\quad\text{for}\quad k\in\{1,2,3\},\label{mf2}\\
&\|f_n\|_{H^1}\leq C2^{-\frac{n}{2}},\quad\|g_n\|_{H^1}=2^{-n}\f\|\phi\g\|_{H^1},\label{mf3}\\
&\left\|g_n\pa_{x}^2f_n\right\|_{L^\infty}\geq \frac{\lambda^2\phi^{2}(0)}{2}. \label{mf4}
\end{align}
\end{lemma}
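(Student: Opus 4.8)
The plan is to establish all four bounds by direct computation, since $f_n$ and $g_n$ are given explicitly; the only points requiring care are the exact powers of $2^n$ and the role of the fixed large parameter $\lambda$. Throughout I would treat $\lambda$ as a fixed constant and let $C$ depend on $\lambda$ together with $\phi$, so that only the dependence on $n$ is tracked. The bound \eqref{mf1} is then immediate: from $g_n=2^{-n}\phi(x)$ we get $\pa_x^jg_n=2^{-n}\phi^{(j)}(x)$, hence $\|\pa_x^jg_n\|_{L^\infty}=2^{-n}\|\phi^{(j)}\|_{L^\infty}$ for $j=0,1,2$, and summing gives the claim because $\phi\in\mathcal S$.

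For \eqref{mf2} I would differentiate $f_n=2^{-n}\phi(2^nx)\cos(\lambda2^nx)$ by the product rule and count powers of $2^n$: every derivative falling on $\phi(2^nx)$ produces a factor $2^n$ against a bounded $\phi$-derivative, and every derivative falling on $\cos(\lambda2^nx)$ produces a factor $\lambda2^n$ (swapping $\cos\leftrightarrow\sin$). After $k$ differentiations each term carries exactly $2^{kn}$ against the prefactor $2^{-n}$, so $\|\pa_x^kf_n\|_{L^\infty}\le C\lambda^k2^{(k-1)n}\le C2^{(k-1)n}$ for $k\in\{1,2,3\}$, while the $L^\infty$ bound on $f_n$ itself uses only $\|\phi\|_{L^\infty}<\infty$.

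The estimate \eqref{mf3} is the crucial one, and it is precisely where the rescaled profile $\phi(2^nx)$ (in place of $\phi(x)$) is exploited. The identity $\|g_n\|_{H^1}=2^{-n}\|\phi\|_{H^1}$ is exact. For $f_n$ I would split $\|f_n\|_{H^1}^2=\|f_n\|_{L^2}^2+\|\pa_xf_n\|_{L^2}^2$ and evaluate each piece with the substitution $y=2^nx$: bounding $\cos^2\le1$ gives $\|f_n\|_{L^2}^2\le 2^{-2n}\int\phi^2(2^nx)\,\dd x=2^{-3n}\|\phi\|_{L^2}^2$, while from $\pa_xf_n=\phi'(2^nx)\cos(\lambda2^nx)-\lambda\phi(2^nx)\sin(\lambda2^nx)$ the same change of variable yields $\|\pa_xf_n\|_{L^2}^2\le C2^{-n}$. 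Thus the gradient term dominates and $\|f_n\|_{H^1}\le C2^{-n/2}$. This is the same order as $\|f_n+g_n\|_{L^\infty}\approx2^{-n}$, which realizes the balance $\|u_0\|_{L^\infty}\approx\|u_0\|_{H^1}^2$ announced in the introduction and is what makes the $k=1,2$ argument run.

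For the lower bound \eqref{mf4} I would evaluate the product at the single point $x=0$ and isolate the resonant term. Among the three terms of $\pa_x^2f_n$, only $-\lambda^22^n\phi(2^nx)\cos(\lambda2^nx)$ carries the factor $\lambda^2$; multiplying by $g_n=2^{-n}\phi(x)$ and setting $x=0$ (so $\cos=1$, $\sin=0$) gives $(g_n\pa_x^2f_n)(0)=\phi(0)(\phi''(0)-\lambda^2\phi(0))$, of absolute value $\phi(0)(\lambda^2\phi(0)-\phi''(0))$. Since $\phi(0)>0$ by Remark \ref{re5}, taking $\lambda$ large enough that $\lambda^2\phi(0)-\phi''(0)\ge\tfrac12\lambda^2\phi(0)$ yields $\|g_n\pa_x^2f_n\|_{L^\infty}\ge\frac{\lambda^2\phi^2(0)}{2}$. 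There is no serious obstacle here: the real work is bookkeeping, namely keeping the powers of $2^n$ and of $\lambda$ straight across the product-rule expansions, and making sure in \eqref{mf3} that it is the $\pa_x$-term (size $2^{-n/2}$), not the $L^2$-term (size $2^{-3n/2}$), that sets the $H^1$ scale.
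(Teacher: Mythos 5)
Your proposal is correct and follows essentially the same route as the paper: explicit product-rule differentiation to track powers of $2^n$ (and $\lambda$) for \eqref{mf1}--\eqref{mf2}, the substitution $y=2^nx$ to get the $H^1$ scaling \eqref{mf3}, and evaluation at $x=0$ (where the sine terms vanish) to isolate the $\lambda^2$ term for the lower bound \eqref{mf4}, with $\lambda$ treated as a fixed constant exactly as in the paper. The only cosmetic difference is that your bound $\|f_n\|_{L^2}^2\lesssim 2^{-3n}$ is slightly sharper than the paper's stated $2^{-2n}$, which is immaterial since the $\pa_x$ term of size $2^{-n}$ dominates in both arguments.
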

\begin{proof}
\eqref{mf1} is obvious and \eqref{mf2} holds since the following
\bbal
&\pa_xf_n=-\lambda\phi(2^nx)\sin\left(\lambda2^nx\right)+\phi'(2^nx)\cos \left(\lambda2^nx\right),\\
&\pa_{x}^2f_n=-2^n\f(\lambda^2\phi(2^nx)\cos \left(\lambda2^nx\right)+2\lambda\phi'(2^nx)\sin \left(\lambda2^nx\right)-\phi''(2^nx)\cos \left(\lambda2^nx\right)\g).
\end{align*}
By easy computations, we obtain
\bbal
&\f\|f_n\g\|^2_{L^2}\leq2^{-2n}\int_{\R}\f|\phi(2^nx)\g|^2\dd x
\leq 2^{-2n}\f\|\phi\g\|^2_{H^1},\\
&\f\|\pa_xf_n\g\|^2_{L^2}\leq4\lambda^2\int_{\R}\f|\phi(2^nx)\g|^2+\f|\phi'(2^nx)\g|^2\dd x
\leq 4\lambda^22^{-n}\f\|\phi\g\|^2_{H^1},
\end{align*}
which gives \eqref{mf3}.
For $\lambda \gg1$, one has
\bbal
\f|g_n\pa_{x}^2f_n\g|(x=0)&=\phi(0)\f|\lambda^2\phi(0)-\phi''(0)\g|
\geq \frac{\lambda^2\phi^{2}(0)}{2},
\end{align*}
which implies the desired \eqref{mf4}.  Thus we finish the proof of Lemma \ref{lmm1}.
\end{proof}

\begin{lemma}\label{lmm0}
Assume that $u_0\in \mathcal{S}$ with $\|u_0\|_{L^\infty}\ll \|\pa_xu_0\|_{L^\infty}\lesssim 1\ll\|\pa^2_xu_0\|_{L^\infty}$.  The data-to-solution map $u_0\mapsto \mathbf{S}_t(u_0)$ of the Cauchy problem \eqref{CH} satisfies that for $t\in(0,1]$
\begin{align*}
&\|\mathbf{S}_t(u_0)\|_{L^\infty(\R)}\les\|u_0\|_{L^\infty(\R)}+\|u_0\|_{H^1}^2,\\
&\|\pa_x\mathbf{S}_t(u_0)\|_{L^{\infty}(\R)}\les\|\pa_xu_0\|_{L^{\infty}(\R)},\\
&\|\pa^2_x\mathbf{S}_t(u_0)\|_{L^{\infty}(\R)}\les\|\pa^2_xu_0\|_{L^{\infty}(\R)}.
\end{align*}
\end{lemma}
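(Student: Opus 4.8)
The plan is to argue along the Lagrangian flow exactly as in Lemma \ref{lm0}, but now retaining the nonlocal forcing $\mathbf{P}(u)$. Write $u=\mathbf{S}_t(u_0)$ and let $\psi$ be the flow of $u$, $\frac{\dd}{\dd t}\psi(t,x)=u(t,\psi(t,x))$ with $\psi(0,x)=x$; since the $L^\infty$-norm of any function is preserved under $\psi$, each of the three bounds reduces to an ODE along a single characteristic. I will use two convolution estimates for $G(x)=\fr12 e^{-|x|}$: the $L^\infty\to L^\infty$ bound $\|\pa_xG*f\|_{L^\infty}\le\|f\|_{L^\infty}$ (from $\|\pa_xG\|_{L^1}=1$) and, crucially, the $L^1\to L^\infty$ bound $\|G*f\|_{L^\infty},\ \|\pa_xG*f\|_{L^\infty}\le\fr12\|f\|_{L^1}$ (from $\|G\|_{L^\infty}=\|\pa_xG\|_{L^\infty}=\fr12$). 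I will also use that \eqref{CH} is precisely the case $b=k+1$ of \eqref{kb}, so its $H^1$-norm is conserved: $\|u(t)\|_{H^1}=\|u_0\|_{H^1}$.

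For the first---and decisive---estimate, integrating $\frac{\dd}{\dd t}u(t,\psi)=\mathbf{P}(u)(t,\psi)$ gives $\|u(t)\|_{L^\infty}\le\|u_0\|_{L^\infty}+\int_0^t\|\mathbf{P}(u)\|_{L^\infty}\,\dd\tau$. The new observation is that $\mathbf{P}(u)=-\pa_xG*(u^2+\fr12 u_x^2)$ should be measured with the $L^1\to L^\infty$ bound instead of the $L^\infty\to L^\infty$ one, which trades the uncontrolled $\|u_x\|_{L^\infty}^2$ for the conserved $H^1$-energy:
\[
\|\mathbf{P}(u)\|_{L^\infty}\le\fr12\f\|u^2+\fr12 u_x^2\g\|_{L^1}\le\fr12\|u\|_{H^1}^2=\fr12\|u_0\|_{H^1}^2 .
\]
Hence $\|u(t)\|_{L^\infty}\le\|u_0\|_{L^\infty}+\fr t2\|u_0\|_{H^1}^2\les\|u_0\|_{L^\infty}+\|u_0\|_{H^1}^2$ for $t\in(0,1]$. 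This is the step that makes the whole $k=1,2$ analysis possible.

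For the second estimate I differentiate \eqref{CH} once and use $\pa_x\mathbf{P}(u)=(u^2+\fr12 u_x^2)-G*(u^2+\fr12 u_x^2)$ to get, along $\psi$,
\[
\frac{\dd}{\dd t}u_x=-\fr12 u_x^2+u^2-G*\f(u^2+\fr12 u_x^2\g).
\]
The favorable sign of $-\fr12 u_x^2$ together with $-G*(\cdots)\le0$ yields the upper bound for free: $\frac{\dd}{\dd t}u_x\le u^2\le\|u\|_{L^\infty}^2$, hence $u_x(t)\le\|\pa_xu_0\|_{L^\infty}+t\|u\|_{L^\infty}^2$. The lower bound is the only delicate point: from $\frac{\dd}{\dd t}u_x\ge-\fr12 u_x^2-\fr12\|u_0\|_{H^1}^2$ a Riccati/continuity argument, using that $\|u\|_{L^\infty}^2$ and $\|u_0\|_{H^1}^2$ are small while $\|\pa_xu_0\|_{L^\infty}$ is of order one, keeps $u_x$ trapped in an interval of size $\approx\|\pa_xu_0\|_{L^\infty}$ on $(0,1]$; this gives $\|u_x(t)\|_{L^\infty}\les\|\pa_xu_0\|_{L^\infty}$.

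Finally, differentiating \eqref{CH} twice, the key structural fact is that $u_{xx}$ satisfies a \emph{linear} transport equation along $\psi$,
\[
\frac{\dd}{\dd t}u_{xx}=-2u_x\,u_{xx}+2uu_x-\pa_xG*\f(u^2+\fr12 u_x^2\g),
\]
whose coefficient $-2u_x$ is bounded by the second estimate and whose source is bounded by $2\|u\|_{L^\infty}\|u_x\|_{L^\infty}+\fr12\|u_0\|_{H^1}^2\les1$ (again via the $L^1$ convolution bound). Grönwall's inequality then gives $\|u_{xx}(t)\|_{L^\infty}\le(\|\pa_x^2u_0\|_{L^\infty}+Ct)e^{Ct}\les\|\pa_x^2u_0\|_{L^\infty}$ for $t\in(0,1]$, since $\|\pa_x^2u_0\|_{L^\infty}\gg1$ absorbs the additive constant. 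The main obstacle is the first estimate: in the merely-$C^1$ setting the quadratic term $(\pa_xu)^2$ cannot be handled through an $L^\infty$ bound on $\mathbf{P}(u)$, and only the passage to the $L^1$-based bound together with $H^1$ conservation controls it; the remaining two estimates are routine transport/Grönwall arguments once this and the favorable sign of $-\fr12 u_x^2$ are exploited.
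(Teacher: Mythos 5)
Your proposal is correct and takes essentially the same route as the paper: transport along the Lagrangian flow of $u$, the $L^1\to L^\infty$ convolution bound for $G$ and $\pa_x G$ combined with conservation of the $H^1$-energy to control $\mathbf{P}(u)$ (the paper derives that bound via $\pa_x^{-1}$ and $\pa_x^2(1-\pa_x^2)^{-1}$, you via $\|\pa_x G\|_{L^\infty}=\fr12$, an immaterial difference), and then the Riccati/Gr\"onwall procedure of Lemma \ref{lm0} for $u_x$ and $u_{xx}$, which is exactly what the paper invokes for its second and third inequalities. The only caveat, which your argument shares with the paper's, is that the bound $\|\pa_x\mathbf{S}_t(u_0)\|_{L^\infty}\lesssim\|\pa_x u_0\|_{L^\infty}$ implicitly requires $\|u_0\|_{H^1}^2\lesssim\|\pa_x u_0\|_{L^\infty}$, which is not listed among the hypotheses but does hold for the initial data of Lemma \ref{lmm1} to which the lemma is applied.
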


\begin{proof} We set $u(t,x)=\mathbf{S}_t(u_0)$ for simplicity.
Given a $C^1$-solution $u(t,x)$ of Eq.\eqref{CH}, we may solve the following ODE to find the flow induced by $u$:
\begin{align*}
\quad\begin{cases}
\frac{\dd}{\dd t}\phi(t,x)=u(t,\phi(t,x)),\\
\phi(0,x)=x.
\end{cases}
\end{align*}
From $\eqref{CH}_1$, we get that
\bbal
\frac{\dd}{\dd t}u(t,\phi(t,x))&=u_{t}(t,\phi(t,x))+u_{x}(t,\phi(t,x))\frac{\dd}{\dd t}\phi(t,x)
=\mathbf{P}(u)(t,\phi(t,x)).
\end{align*}
Integrating the above with respect to time variable yields that
\bbal
u(t,\phi(t,x))=u_0(x)+\int_0^t\mathbf{P}(u)(\tau,\phi(t,x))\dd \tau.
\end{align*}
Using the fact that the $L^{\infty}$-norm of any function is preserved under the flow $\phi$, then we have
\bal\label{hlf}
\|u(t, x)\|_{L^{\infty}(\R)}=\|u(t, \phi(t, x))\|_{L^{\infty}(\R)}\leq\|u_0(x)\|_{L^{\infty}(\R)}+\int_0^t\|\mathbf{P}(u(\tau, x))\|_{L^{\infty}(\R)}\dd \tau.
\end{align}
Notice that the fact $\partial_x^{-1}f(x)=\int_0^xf(y)\dd y$ and  the following key estimate:
\begin{align*}
\left\|\partial_x\left(1-\partial^2_{x}\right)^{-1} f\right\|_{L^{\infty}}&=\left\|\partial^2_x\left(1-\partial^2_{x}\right)^{-1} \f(\int_{-\infty}^{x}f\dd y\g)\right\|_{L^{\infty}}
\leq2\int_{-\infty}^{+\infty}|f(x)|\dd x,
\end{align*}
we easily estimate the last term as follows
\begin{align}\label{lf11}
\f\|\mathbf{P}\f(u\g)\g\|_{L^{\infty}} &\les\int_{\R}|u|^2+\fr12|\pa_xu|^2\dd x\les \|u_0\|_{H^1}^2,
\end{align}
where we have used the fact that the $H^1$-norm of solution to the CH equation is conserved, i.e., $\|u\|_{H^1}=\|u_0\|_{H^1}$.

Inserting \eqref{lf11} into \eqref{hlf} yields the first inequality.
Following the same procedure with that of Lemma \ref{lm0}, we can obtain the remaining inequalities. We complete the proof of Lemma \ref{lmm0}.
\end{proof}

\begin{lemma}\label{lmm2} Assume that $u_0\in \mathcal{S}$ with $\|u_0\|_{L^\infty}\ll \|\pa_xu_0\|_{L^\infty}\lesssim 1\ll\|\pa^2_xu_0\|_{L^\infty}$.  The data-to-solution map $u_0\mapsto \mathbf{S}_t(u_0)$ of the Cauchy problem \eqref{CH} satisfies that for $t\in(0,1]$
\bal
&\f\|\mathbf{S}_{t}(u_0)-u_0\g\|_{L^\infty}\les t\f(\|u_0\|_{L^{\infty}}+\|u_0\|^2_{H^{1}}\g),\label{y1y}\\
&\f\|\pa_x\left(\mathbf{S}_{t}(u_0)-u_0\right)\g\|_{L^\infty}\leq Ct\left(1+\big(\|u_0\|_{L^{\infty}}+\|u_0\|^2_{H^{1}}\big)\|\pa_{x}^2u_0\|_{L^{\infty}}\right),\label{y2y}\\
&\f\|\pa_{x}^2\left(\mathbf{S}_{t}(u_0)-u_0\right)\g\|_{L^\infty}
\leq Ct\left(\|\pa_{x}^2u_0\|_{L^\infty}+\big(\|u_0\|_{L^{\infty}}+\|u_0\|^2_{H^{1}}\big)\|\pa_{x}^3u_0\|_{L^\infty}\right).\label{y3y}
\end{align}
\end{lemma}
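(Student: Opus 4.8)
The plan is to follow the scheme of Lemma \ref{lm2} verbatim, the only genuinely new ingredient being the nonlocal forcing $\mathbf{P}$, which is absent from the approximate equation \eqref{B}. Writing $u(t,x)=\mathbf{S}_t(u_0)$ and applying the Newton--Leibniz formula to $\eqref{CH}_1$ gives
\[
\mathbf{S}_t(u_0)-u_0=-\int_0^t\f(\mathbf{S}_\tau(u_0)\,\pa_x\mathbf{S}_\tau(u_0)-\mathbf{P}(\mathbf{S}_\tau(u_0))\g)\dd\tau .
\]
Taking $L^\infty$ norms, the transport piece $\mathbf{S}_\tau\pa_x\mathbf{S}_\tau$ is estimated exactly as in Lemma \ref{lm2} by the bounds $\|\mathbf{S}_\tau(u_0)\|_{L^\infty}\les\|u_0\|_{L^\infty}+\|u_0\|^2_{H^1}$ and $\|\pa_x\mathbf{S}_\tau(u_0)\|_{L^\infty}\les1$ of Lemma \ref{lmm0}, while the forcing is controlled by the bound $\|\mathbf{P}(u)\|_{L^\infty}\les\|u_0\|^2_{H^1}$ already established in \eqref{lf11}. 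Summing the two contributions and integrating in $\tau$ yields \eqref{y1y} at once.

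For \eqref{y2y} and \eqref{y3y} I would differentiate the same identity once and twice in $x$ and estimate each resulting term in $L^\infty$. Expanding $\pa_x(\mathbf{S}_\tau\pa_x\mathbf{S}_\tau)=(\pa_x\mathbf{S}_\tau)^2+\mathbf{S}_\tau\pa_x^2\mathbf{S}_\tau$ and $\pa_x^2(\mathbf{S}_\tau\pa_x\mathbf{S}_\tau)=3\,\pa_x\mathbf{S}_\tau\,\pa_x^2\mathbf{S}_\tau+\mathbf{S}_\tau\pa_x^3\mathbf{S}_\tau$ by the Leibniz rule, every factor is controlled by Lemma \ref{lmm0} together with the companion third-order propagation bound $\|\pa_x^3\mathbf{S}_\tau(u_0)\|_{L^\infty}\les\|\pa_x^3u_0\|_{L^\infty}$, which I would record by carrying the flow-map argument of Lemma \ref{lmm0} (differentiate $\eqref{CH}_1$ and transport along $\phi$) one derivative further. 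This produces precisely the local contributions $1$ and $(\|u_0\|_{L^\infty}+\|u_0\|^2_{H^1})\|\pa_x^2u_0\|_{L^\infty}$ in \eqref{y2y}, and $\|\pa_x^2u_0\|_{L^\infty}$ and $(\|u_0\|_{L^\infty}+\|u_0\|^2_{H^1})\|\pa_x^3u_0\|_{L^\infty}$ in \eqref{y3y}.

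The new work is to bound $\pa_x\mathbf{P}$ and $\pa_x^2\mathbf{P}$ in $L^\infty$ \emph{without losing a derivative}. The key algebraic fact is the operator identity $\pa_x^2(1-\pa_x^2)^{-1}=(1-\pa_x^2)^{-1}-\mathrm{Id}$. Writing $f:=u^2+\fr12(\pa_xu)^2$, so that $\mathbf{P}(u)=-\pa_x(1-\pa_x^2)^{-1}f$, this identity gives $\pa_x\mathbf{P}(u)=f-(1-\pa_x^2)^{-1}f$, whence, using $\|(1-\pa_x^2)^{-1}f\|_{L^\infty}=\|G*f\|_{L^\infty}\le\|f\|_{L^\infty}$ with $G(x)=\fr12e^{-|x|}$,
\[
\|\pa_x\mathbf{P}(u)\|_{L^\infty}\les\|f\|_{L^\infty}\les\|u\|^2_{L^\infty}+\|\pa_xu\|^2_{L^\infty}\les1 .
\]
Differentiating once more gives $\pa_x^2\mathbf{P}(u)=\pa_xf-\pa_x(1-\pa_x^2)^{-1}f$; the smoothing estimate $\|\pa_x(1-\pa_x^2)^{-1}f\|_{L^\infty}\le\|f\|_{L^\infty}\les1$ disposes of the second summand, and $\|\pa_xf\|_{L^\infty}\les\|u\|_{L^\infty}\|\pa_xu\|_{L^\infty}+\|\pa_xu\|_{L^\infty}\|\pa_x^2u\|_{L^\infty}\les\|\pa_x^2u_0\|_{L^\infty}$ handles the first. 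Feeding these $O(1)$ and $O(\|\pa_x^2u_0\|_{L^\infty})$ bounds for the forcing into the differentiated identities and integrating over $[0,t]$ completes \eqref{y2y} and \eqref{y3y}.

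The main obstacle is exactly this last point: a crude estimate would apply $\pa_x(1-\pa_x^2)^{-1}$ to $\pa_xf$ (two derivatives of $u^2$ and $(\pa_xu)^2$) and thereby lose control, whereas the reduction through $\pa_x^2(1-\pa_x^2)^{-1}=(1-\pa_x^2)^{-1}-\mathrm{Id}$ keeps every $\mathbf{P}$-contribution at the same size as the transport terms. What makes all of this close is the hypothesis $\|u_0\|_{L^\infty}\ll\|\pa_xu_0\|_{L^\infty}\les1\ll\|\pa_x^2u_0\|_{L^\infty}$ together with the conservation $\|u\|_{H^1}=\|u_0\|_{H^1}$, which guarantee that the smallness of $\|u\|_{L^\infty}+\|u\|^2_{H^1}$ prevents the nonlocal forcing from ever dominating, so the Camassa--Holm estimates take the same shape as the local ones of Lemma \ref{lm2}.
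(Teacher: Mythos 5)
Your proposal is correct and follows essentially the same route as the paper's proof: the Newton--Leibniz identity \eqref{hyy}, the propagation bounds of Lemma \ref{lmm0}, and the smoothing of $(1-\pa_x^2)^{-1}$ to keep the nonlocal term from costing a derivative (the paper reaches \eqref{y1y} by a small Gronwall argument where you estimate directly, and it leaves implicit both the identity $\pa_x^2(1-\pa_x^2)^{-1}=(1-\pa_x^2)^{-1}-\mathrm{Id}$ and the third-order propagation bound, which you state explicitly). The only point to watch is that carrying Lemma \ref{lmm0} one derivative further produces a term $\|\pa_x^2 u_0\|_{L^\infty}^2$ from $(\pa_x^2 u)^2$, which is absorbed into $\|\pa_x^3 u_0\|_{L^\infty}$ via Landau's inequality $\|f''\|_{L^\infty}^2\les \|f'\|_{L^\infty}\|f'''\|_{L^\infty}$ and the hypothesis $\|\pa_x u_0\|_{L^\infty}\les 1$, after which your bounds (and the downstream use in Lemma \ref{lmm3}) go through unchanged.
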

\begin{proof} By Lemma \ref{lm0}, we know that the solution map $\mathbf{S}_{t}(u_0)\in \mathcal{C}([0,T];C^1)$ and has common lifespan $T$. Moreover, there holds
\bbal
&\|\mathbf{S}_{t}(u_0)\|_{L^\infty_{T}(C^1)}\les\|u_0\|_{C^{1}}\les 1.
\end{align*}
By the Newton-Leibniz formula, we have
\bal\label{hyy}
\mathbf{S}_{t}(u_0)-u_0=-\int^t_0\mathbf{S}_\tau(u_0)\pa_x\mathbf{S}_\tau(u_0)\dd\tau+\int^t_0\mathbf{P}\f(\mathbf{S}_\tau(u_0)\g)\dd\tau.
\end{align}
Using \eqref{lf11} and \eqref{hyy} yields
\bbal
\|\mathbf{S}_{t}(u_0)-u_0\|_{L^\infty}&\leq \int^t_0\|\mathbf{S}_\tau(u_0)-u_0\|_{L^\infty}\|\pa_x\mathbf{S}_\tau(u_0)\|_{L^\infty}\dd \tau+t\f(\|u_0\|_{L^{\infty}}+\|u_0\|^2_{H^{1}}\g)\\
&\leq \int^t_0\|\mathbf{S}_\tau(u_0)-u_0\|_{L^\infty}\dd \tau+t\f(\|u_0\|_{L^{\infty}}+\|u_0\|^2_{H^{1}}\g),
\end{align*}
from which and Gronwall's inequality, one has \eqref{y1y}.

Setting $g=\pa_{x}\left(\mathbf{S}_{t}(u_0)-u_0\right)$, then from \eqref{hyy}, we deduce
\begin{align*}
\|g(t)\|_{L^\infty}&\leq\int^t_0\|\mathbf{S}_\tau(u_0)\|_{L^\infty}\|\pa_x^2\mathbf{S}_\tau(u_0)\|_{L^\infty}\dd \tau
+\int^t_0\|\pa_x\mathbf{S}_\tau(u_0)\|^2_{L^\infty}\dd \tau+\int^t_0\|\pa_x\mathbf{P}\f(\mathbf{S}_\tau(u_0)\g)\|_{L^\infty}\dd\tau\\&
\les t\f(\big(\|u_0\|_{L^{\infty}}+\|u_0\|^2_{H^{1}}\big)\|\pa_x^2u_0\|_{L^{\infty}}+1\g).
\end{align*}
Setting $h=\pa_{x}^2\left(\mathbf{S}_{t}(u_0)-u_0\right)$, then from \eqref{hyy}, we have
\begin{align*}
\|h(t)\|_{L^\infty}&\leq \int_0^t\left\|\pa_x^2\f(\mathbf{S}_\tau(u_0)\pa_x\mathbf{S}_\tau(u_0)\g)\right\|_{L^\infty}\dd \tau+\int_0^t\left\|\pa_x^2\mathbf{P}\f(\mathbf{S}_\tau(u_0)\g)\right\|_{L^\infty}\dd \tau\nonumber\\
&\les t\left(1+\|\pa_{x}^2u_0\|_{L^\infty}+\big(\|u_0\|_{L^{\infty}}+\|u_0\|^2_{H^{1}}\big)\|\pa_{x}^3u_0\|_{L^\infty}\right),
\end{align*}
which implies \eqref{y3y}. This completes the proof of Lemma \ref{lmm2}.
\end{proof}
To obtain the non-uniformly continuous property of data-to-solution map for Eq.\eqref{CH}, we need to prove the crucial Lemma.
\begin{lemma}\label{lmm3}
Assume that $u_0\in \mathcal{S}$ with $\|u_0\|_{L^\infty}\ll \|\pa_xu_0\|_{L^\infty}\lesssim 1\ll\|\pa^2_xu_0\|_{L^\infty}$. Then the data-to-solution map $u_0\mapsto \mathbf{S}_t(u_0)$ of the Cauchy problem \eqref{CH} satisfies that for $0<t\ll1$
\bbal
\left\|\mathbf{E}(u_0)\right\|_{C^1}&\les \f(\f\|u_0\g\|_{L^\infty}+\|u_0\|_{H^1}^2\g)\f(\|\pa_{x}^2u_0\|_{L^\infty}+\big(\|u_0\|_{L^{\infty}}+\|u_0\|^2_{H^{1}}\big) \|\pa_{x}^3u_0\|_{L^{\infty}}\g)t^2,
\end{align*}
where we denote
$$\mathbf{E}(u_0,t):=\mathbf{S}_{t}(u_0)-u_0+tu_0\pa_x u_0-\int^t_0\mathbf{P}\f(\mathbf{S}_{\tau}(u_0)\g)\dd \tau.$$
\end{lemma}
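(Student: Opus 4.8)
The plan is to mirror the proof of Lemma \ref{lm3}, exploiting the one structural feature that makes the Camassa--Holm case reduce to a transport-type estimate: the subtracted integral $\int_0^t\mathbf{P}(\mathbf{S}_\tau(u_0))\,\dd\tau$ built into the definition of $\mathbf{E}$ is exactly what cancels the nonlocal term of the flow. Concretely, I would first differentiate $\mathbf{E}$ in time. Since $\mathbf{S}_t(u_0)$ solves \eqref{CH}, we have $\pa_t\mathbf{S}_t(u_0)=-\mathbf{S}_t(u_0)\pa_x\mathbf{S}_t(u_0)+\mathbf{P}(\mathbf{S}_t(u_0))$, so that
\[
\pa_t\mathbf{E}(u_0,t)=-\mathbf{S}_t(u_0)\pa_x\mathbf{S}_t(u_0)+u_0\pa_xu_0 ,
\]
the two $\mathbf{P}$-contributions cancelling exactly. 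Combined with $\mathbf{E}(u_0,0)=0$, this yields the integral representation
\[
\mathbf{E}(u_0,t)=-\int_0^t\f(\mathbf{S}_\tau(u_0)\pa_x\mathbf{S}_\tau(u_0)-u_0\pa_xu_0\g)\,\dd\tau ,
\]
which is precisely the $k=1$ analogue of the integrand treated in Lemma \ref{lm3}, now with the actual solution $\mathbf{S}_\tau(u_0)$ in place of the approximate one.

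Next I would rewrite the integrand as a perfect derivative, $\mathbf{S}_\tau(u_0)\pa_x\mathbf{S}_\tau(u_0)-u_0\pa_xu_0=\tfrac12\pa_x\f([\mathbf{S}_\tau(u_0)]^2-u_0^2\g)$, and factor $[\mathbf{S}_\tau(u_0)]^2-u_0^2=\f(\mathbf{S}_\tau(u_0)-u_0\g)\mathbf{F}$ with $\mathbf{F}:=\mathbf{S}_\tau(u_0)+u_0$. Then
\[
\|\mathbf{E}(u_0,t)\|_{C^1}\les\int_0^t\f\|\pa_x\f(\f(\mathbf{S}_\tau(u_0)-u_0\g)\mathbf{F}\g)\g\|_{C^1}\,\dd\tau ,
\]
and expanding the $C^1$-norm of this product by the Leibniz rule leaves exactly the same three groups of terms as in \eqref{su1}: those weighted by $\|\mathbf{S}_\tau(u_0)-u_0\|_{L^\infty}$, those weighted by $\|\pa_x(\mathbf{S}_\tau(u_0)-u_0)\|_{L^\infty}$, and the dominant one $\|\pa_x^2(\mathbf{S}_\tau(u_0)-u_0)\|_{L^\infty}\,\|\mathbf{F}\|_{L^\infty}$.

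To close, I would insert the CH-specific bounds proved earlier. Lemma \ref{lmm0} controls $\mathbf{F}=\mathbf{S}_\tau(u_0)+u_0$ through $\|\mathbf{F}\|_{L^\infty}\les\|u_0\|_{L^\infty}+\|u_0\|_{H^1}^2$, $\|\pa_x\mathbf{F}\|_{L^\infty}\les1$ and $\|\pa_x^2\mathbf{F}\|_{L^\infty}\les\|\pa_x^2u_0\|_{L^\infty}$, while Lemma \ref{lmm2} supplies, via \eqref{y1y}--\eqref{y3y}, the factor $\tau$ (hence $t^2$ after integration) together with the correct weights on $\mathbf{S}_\tau(u_0)-u_0$ and its derivatives. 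Substituting these, the dominant group $\|\pa_x^2(\mathbf{S}_\tau(u_0)-u_0)\|_{L^\infty}\|\mathbf{F}\|_{L^\infty}$ reproduces exactly the two pieces of the claimed right-hand side, namely $\f(\|u_0\|_{L^\infty}+\|u_0\|_{H^1}^2\g)\|\pa_x^2u_0\|_{L^\infty}t^2$ and $\f(\|u_0\|_{L^\infty}+\|u_0\|_{H^1}^2\g)^2\|\pa_x^3u_0\|_{L^\infty}t^2$, and the remaining two groups are subleading.

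The main obstacle, as in Lemma \ref{lm3}, is not analytic but bookkeeping: one must check that the two lower-order groups are genuinely dominated in the working regime $\|u_0\|_{L^\infty}\ll\|\pa_xu_0\|_{L^\infty}\les1\ll\|\pa_x^2u_0\|_{L^\infty}$. The delicate point is the $\mathbf{P}$-generated constant $1$ appearing inside \eqref{y2y}, which enters the middle group as a would-be stray $O(t^2)$ term; I would absorb it using that $\|\pa_x^2u_0\|_{L^\infty}\gg1$ together with $\f(\|u_0\|_{L^\infty}+\|u_0\|_{H^1}^2\g)\|\pa_x^2u_0\|_{L^\infty}\gtrsim1$, which holds because in the construction of Lemma \ref{lmm1} the quantities $\|u_0\|_{L^\infty}$ and $\|u_0\|_{H^1}^2$ sit at the same scale. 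With this the subleading groups are harmlessly dominated by the leading term, and the estimate follows.
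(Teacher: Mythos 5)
Your proposal reproduces the paper's own proof essentially verbatim: the same integral representation of $\mathbf{E}(u_0,t)$ (the two $\mathbf{P}$-contributions cancel, which is just the Newton--Leibniz identity \eqref{hyy} rearranged), the same factorization $\mathbf{S}_{\tau}(u_0)\partial_x\mathbf{S}_{\tau}(u_0)-u_0\partial_xu_0=\tfrac12\partial_x\big[(\mathbf{S}_{\tau}(u_0)-u_0)(\mathbf{S}_{\tau}(u_0)+u_0)\big]$, the same three-group Leibniz expansion of the $C^1$-norm, and the same closing inputs from Lemmas \ref{lmm0} and \ref{lmm2}. The one place where you go beyond the paper is also sound: the constant $1$ in \eqref{y2y} does produce a stray $O(t^2)$ term that the paper's last line silently drops, and your absorption of it via $(\|u_0\|_{L^\infty}+\|u_0\|_{H^1}^2)\|\partial_x^2u_0\|_{L^\infty}\gtrsim 1$ is exactly what holds for the data of Lemma \ref{lmm1} to which this lemma is applied (there the product is of size $\lambda^2$), even though it is not a formal consequence of the lemma's stated hypotheses alone.
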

\begin{proof} From \eqref{hyy}, we have
\begin{align*}
\mathbf{E}(u_0,t)
=\int^t_0\left(u_0\partial_xu_0-\mathbf{S}_{\tau}(u_0)\partial_x\mathbf{S}_{\tau}(u_0)\right)\dd \tau,
\end{align*}
which gives that
\begin{align*}
\left\|\mathbf{E}(u_0,t)\right\|_{C^1}
&\leq \int^t_0\f\|\mathbf{S}_{\tau}(u_0)\partial_x\mathbf{S}_{\tau}(u_0)-u_0\partial_xu_0\g\|_{C^1}\dd \tau\nonumber\\
&\leq\int^t_0\f\|\pa_x[\f(\mathbf{S}_{\tau}(u_0)-u_0\g)\f(\mathbf{S}_{\tau}(u_0)+u_0\g)],\;\pa^2_x[\f(\mathbf{S}_{\tau}(u_0)-u_0\g)\f(\mathbf{S}_{\tau}(u_0)+u_0\g)]\g\|_{L^\infty}\dd \tau\nonumber\\
&\les\int^t_0\f\|\mathbf{S}_{\tau}(u_0)-u_0\g\|_{L^\infty}\f\|\pa_x\f(\mathbf{S}_{\tau}(u_0)+u_0\g),\;\pa_x^2\f(\mathbf{S}_{\tau}(u_0)+u_0\g)\g\|_{L^\infty}\dd \tau\nonumber\\
&\quad+\int^t_0\f\|\pa_x\f(\mathbf{S}_{\tau}(u_0)-u_0\g)\g\|_{L^\infty}\f\|\mathbf{S}_{\tau}(u_0)+u_0,\;\pa_x\f(\mathbf{S}_{\tau}(u_0)+u_0\g)\g\|_{L^\infty}\dd \tau\nonumber\\
 &\quad+\int^t_0\f\|\pa_x^2\f(\mathbf{S}_{\tau}(u_0)-u_0\g)\g\|_{L^\infty}\f\|\mathbf{S}_{\tau}(u_0)+u_0\g\|_{L^\infty}\dd \tau\nonumber\\
 &\les\int^t_0\f\|\mathbf{S}_{\tau}(u_0)-u_0\g\|_{L^\infty}\|\pa_x^2u_0\|_{L^\infty}\dd \tau+\int^t_0\f\|\pa_x\f(\mathbf{S}_{\tau}(u_0)-u_0\g)\g\|_{L^\infty}\dd \tau\nonumber\\
 &\quad
+\int^t_0\f\|\pa_x^2\f(\mathbf{S}_{\tau}(u_0)-u_0\g)\g\|_{L^\infty}\f(\f\|u_0\g\|_{L^\infty}+\|u_0\|_{H^1}^2\g)\dd \tau.
\end{align*}
Using Lemma \ref{lmm2} to the  above, we complete the proof of Lemma \ref{lmm3}.
\end{proof}

Finally, we present the proposition which leads to the non-uniformly continuous property of data-to-solution map for for the CH equation \eqref{CH}.
\begin{proposition}\label{prop}
Let $f_n$ and $g_n$ be given in Lemma \ref{lmm1}. Then the difference between the data-to-solution maps $f_n+g_n\mapsto \mathbf{S}_t(f_n+g_n)$ and  $f_n\mapsto \mathbf{S}_t(f_n)$ of the Cauchy problem \eqref{CH} satisfies that for $0<t\ll1$
\bbal
\liminf_{n\rightarrow \infty}\left\|\mathbf{S}_{t}(f_n+g_n)-\mathbf{S}_{t}(f_n)\right\|_{C^1}\gtrsim t.
\end{align*}
\end{proposition}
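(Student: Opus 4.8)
The plan is to mirror the structure used for Proposition \ref{pro1} in the case $k\geq 3$, but now working with the CH-adapted decomposition that keeps track of the nonlocal term $\mathbf{P}$. First I would set $u^n_0:=f_n+g_n$ and expand $\mathbf{S}_t(u^n_0)$ using the identity defining $\mathbf{E}(u^n_0,t)$ from Lemma \ref{lmm3}, namely
\begin{align*}
\mathbf{S}_t(u^n_0)=\mathbf{E}(u^n_0,t)+u^n_0-tu^n_0\pa_xu^n_0+\int^t_0\mathbf{P}\f(\mathbf{S}_\tau(u^n_0)\g)\dd\tau,
\end{align*}
and the analogous expansion for $\mathbf{S}_t(f_n)$. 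Subtracting and using the triangle inequality, the difference $\|\mathbf{S}_t(f_n+g_n)-\mathbf{S}_t(f_n)\|_{C^1}$ is bounded below by $t\|(f_n+g_n)\pa_x(f_n+g_n)-f_n\pa_xf_n\|_{C^1}$ minus the error contributions: the two $\mathbf{E}$-terms, the difference $\|g_n\|_{C^1}$, and the difference of the two $\mathbf{P}$-integrals.

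Second I would isolate the leading term. The key lower bound comes from $\|g_n\pa_x^2f_n\|_{L^\infty}\gtrsim \lambda^2\phi^2(0)$ in \eqref{mf4}, which feeds the $C^1$-norm (through its $\pa_x$ component) of the product $(f_n+g_n)\pa_x(f_n+g_n)$. I would show that after expanding this product, the dominant surviving piece at the level of the second derivative is $g_n\pa_x^2f_n$, with all other terms controlled by Lemma \ref{lmm1}; concretely I expect a bound of the form
\begin{align*}
\f\|(f_n+g_n)\pa_x(f_n+g_n)-f_n\pa_xf_n\g\|_{C^1}\gtrsim \lambda^2\phi^2(0)-C,
\end{align*}
so choosing $\lambda$ large fixes a positive constant $c_0$ independent of $n$. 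Here the careful point is that $f_n$ now has the self-similar profile $\phi(2^nx)$, so $\|\pa_x^2f_n\|_{L^\infty}\approx\lambda^22^n$ while $\|g_n\|_{L^\infty}\approx 2^{-n}$; their product is $O(\lambda^2)$, which is exactly the mechanism keeping the leading term bounded below uniformly in $n$ while the data difference $\|g_n\|_{C^1}\to 0$.

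Third I would dispatch the error terms. Both $\mathbf{E}(f_n+g_n,t)$ and $\mathbf{E}(f_n,t)$ are handled by Lemma \ref{lmm3}: substituting the size estimates from Lemma \ref{lmm1} (in particular $\|u^n_0\|_{L^\infty}+\|u^n_0\|_{H^1}^2\lesssim 2^{-n}$, $\|\pa_x^2u^n_0\|_{L^\infty}\approx 2^n$, $\|\pa_x^3u^n_0\|_{L^\infty}\approx 2^{2n}$) yields $\|\mathbf{E}\|_{C^1}\lesssim t^2$, with the $n$-dependent powers cancelling. For the difference of the $\mathbf{P}$-integrals I would use that $\mathbf{P}$ maps into a smoothing space, estimating $\|\mathbf{P}(\mathbf{S}_\tau(f_n+g_n))-\mathbf{P}(\mathbf{S}_\tau(f_n))\|_{C^1}$ via the operator bound $\|\pa_x(1-\pa_x^2)^{-1}\cdot\|_{L^\infty}\lesssim\|\cdot\|_{L^\infty}$ together with the $L^\infty$ and $H^1$ control of the solutions; I expect this contribution to be $o(1)$ as $n\to\infty$, again because $\|u^n_0\|_{H^1}\to 0$ and the CH flow conserves the $H^1$-norm. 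Collecting everything gives a bound of the shape $c_0t-Ct^2-\varepsilon_n$ with $\varepsilon_n\to0$, so taking $\liminf_{n\to\infty}$ and then $t$ small produces the claimed lower bound $\gtrsim t$.

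The main obstacle I anticipate is the $\mathbf{P}$-integral difference, since unlike the $k\geq3$ case this term is genuinely present and must be shown to be asymptotically negligible; the crux is verifying that it does not interfere with the leading-order constant $c_0$, which relies on the smoothing of $\mathbf{P}$ and on the fact that both the $L^\infty$-norm and the $H^1$-norm of the initial data (and hence of the solutions, by $H^1$-conservation) tend to zero at rate $2^{-n}$. A secondary subtlety is confirming that the self-similar scaling of $f_n$ keeps the product $g_n\pa_x^2f_n$ at exactly the $O(\lambda^2)$ level rather than degenerating, which is precisely the content of \eqref{mf4} and the reason the modified construction in Lemma \ref{lmm1} is needed for the low-regularity cases $k=1,2$.
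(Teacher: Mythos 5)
Your overall skeleton coincides with the paper's: the same decomposition of $\mathbf{S}_t(f_n+g_n)$ and $\mathbf{S}_t(f_n)$ through $\mathbf{E}(\cdot,t)$ of Lemma \ref{lmm3}, the same leading term driven by \eqref{mf4}, and the same treatment of the $\mathbf{E}$-errors via Lemmas \ref{lmm1} and \ref{lmm3}. The genuine gap is precisely in the term you yourself flag as the main obstacle: the difference of the $\mathbf{P}$-integrals. Your proposed mechanism --- the smoothing bound for $\pa_x(1-\pa_x^2)^{-1}$ combined with $H^1$-conservation and $L^\infty$-smallness of the solutions --- controls only the $L^\infty$ component of the $C^1$-norm of $\mathbf{P}(\mathbf{S}_\tau(f_n+g_n))-\mathbf{P}(\mathbf{S}_\tau(f_n))$. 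For the derivative component one faces
\begin{align*}
\pa_x\mathbf{P}(u)=\left(-I+(1-\pa_x^2)^{-1}\right)\left(u^2+\tfrac12(\pa_xu)^2\right),
\end{align*}
whose identity part has no smoothing at all, so you must estimate $\left\|(\pa_x\mathbf{S}_\tau(f_n+g_n))^2-(\pa_x\mathbf{S}_\tau(f_n))^2\right\|_{L^\infty}$ pointwise. $H^1$-smallness is useless here (the $H^1$-norm does not control $\pa_x$ in $L^\infty$), and the only a priori bound available is $\lesssim\|\pa_x\mathbf{S}_\tau(f_n+g_n)\|_{L^\infty}^2+\|\pa_x\mathbf{S}_\tau(f_n)\|_{L^\infty}^2\approx\lambda^2\phi^2(0)$, by Lemma \ref{lmm0} together with $\|\pa_xf_n\|_{L^\infty}\approx\lambda\phi(0)$. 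This is of the same order in $\lambda$ as (indeed larger than) the leading constant $c_0=\lambda^2\phi^{2}(0)/2$, and after time integration it enters at order $t$, exactly like the leading term; hence it can be absorbed neither by taking $\lambda$ large nor by taking $t$ small. As stated, your argument does not close.

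The repair is the paper's three-way split: insert $\mathbf{P}(u^n_0)$ and $\mathbf{P}(f_n)$ and write the integrand as $[\mathbf{P}(\mathbf{S}_\tau(u^n_0))-\mathbf{P}(u^n_0)]+[\mathbf{P}(u^n_0)-\mathbf{P}(f_n)]+[\mathbf{P}(f_n)-\mathbf{P}(\mathbf{S}_\tau(f_n))]$, with $u^n_0=f_n+g_n$. In each bracket the problematic quadratic difference factorizes with one factor that is genuinely small in $C^1$: for the outer brackets, $\|\mathbf{S}_\tau(w)-w\|_{C^1}\lesssim\tau$ for $w\in\{u^n_0,f_n\}$ by Lemma \ref{lmm2} (this is where Lemma \ref{lmm2}, which you never invoke, is indispensable), and for the middle bracket $\|u^n_0-f_n\|_{C^1}=\|g_n\|_{C^1}\lesssim2^{-n}$. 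This yields
\begin{align*}
\int_0^t\left\|\mathbf{P}\left(\mathbf{S}_\tau(u^n_0)\right)-\mathbf{P}\left(\mathbf{S}_\tau(f_n)\right)\right\|_{C^1}\dd\tau\lesssim t^2+t2^{-n},
\end{align*}
which is absorbable by the leading term for small $t$ and large $n$; the rest of your argument then goes through as in the paper.
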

\begin{proof}
Let $u^n_0:=f_n+g_n$. We decompose the solution maps as follows
\bbal
\mathbf{S}_{t}(u^n_0)&=\underbrace{\mathbf{S}_{t}(u^n_0)-u^n_0+tu^n_0\pa_xu^n_0-\int^t_0\mathbf{P}\f(\mathbf{S}_{\tau}(u_0^n)\g)\dd \tau}_{=:\,\mathbf{E}(u_0^n,\,t)}\\
&\quad+f_n+g_n-t(f_n+g_n)\pa_x(f_n+g_n)+\int^t_0\mathbf{P}\f(\mathbf{S}_{\tau}(u_0^n)\g)\dd \tau,\\
\mathbf{S}_{t}(f_n)&=\underbrace{\mathbf{S}_{t}(f_n)-f_n+tf_n\pa_xf_n-\int^t_0\mathbf{P}\f(\mathbf{S}_{\tau}(f_n)\g)\dd \tau}_{=:\,\mathbf{E}(f_n,\,t)}\\
&\quad+f_n-tf_n\pa_xf_n+\int^t_0\mathbf{P}\f(\mathbf{S}_{\tau}(f_n)\g)\dd \tau,
\end{align*}
using the triangle inequality, we deduce that
\bal\label{h0y}
\|\mathbf{S}_{t}(f_n+g_n)-\mathbf{S}_{t}(f_n)\|_{C^1}
&\geq t\f\|g_n\pa_xf_n\g\|_{C^1}-t\f\|g_n\pa_xg_n,\,f_n\pa_xg_n\g\|_{C^1}-\|g_n,\,\mathbf{E}(u_0^n,\,t),\,\mathbf{E}(f_n,\,t)\|_{C^1}\nonumber\\
&\quad-\int_0^t\f\|\mathbf{P}\f(\mathbf{S}_{\tau}(u_0^n)\g)-\mathbf{P}\f(\mathbf{S}_{\tau}(f_n)\g)\g\|_{C^1}\dd\tau.
\end{align}
It is not difficult to find that
\begin{align}\label{yyy1y}
\|g_n\|_{C^1}+\f\|f_n\partial_xg_n\g\|_{C^1}+\f\|g_n\partial_xg_n\g\|_{C^1}&\leq C2^{-n}.
\end{align}
Using Lemma  \ref{lmm3} with $u_0=f_n$ and $u_0=f_n+g_n$, respectively, and then by Lemma \ref{lmm1}, we obtain
\bal\label{yy3y}
\f\|\mathbf{E}(f_n,\,t),\,\mathbf{E}(u_0^n,\,t)\g\|_{C^1}\les t^2.
\end{align}
By Lemmas \ref{lmm1}-\ref{lmm2}, we obtain
\begin{align*}
\f\|\mathbf{P}\f(\mathbf{S}_{\tau}(u_0^n)\g)-\mathbf{P}\f(u_0^n\g)\g\|_{C^1}&\les \f\|\f(\mathbf{S}_{\tau}(u_0^n)\g)^2-(u_0^n)^2\g\|_{L^\infty}+\f\|\f(\pa_x\mathbf{S}_{\tau}(u_0^n)\g)^2-(\pa_xu_0^n)^2\g\|_{L^\infty}\\
&\les \f\|\mathbf{S}_{\tau}(u_0^n)-u_0^n\g\|_{C^1}\f\|\mathbf{S}_{\tau}(u_0^n),\,u_0^n\g\|_{C^1}\les\tau,\\
\f\|\mathbf{P}\f(\mathbf{S}_{\tau}(f_n)\g)-\mathbf{P}\f(f_n\g)\g\|_{C^1}
&\les \f\|\mathbf{S}_{\tau}(f_n)-f_n\g\|_{C^1}\f\|\mathbf{S}_{\tau}(f_n),\,f_n\g\|_{C^1}\les\tau,\\
\f\|\mathbf{P}\f(u_0^n\g)-\mathbf{P}\f(f_n\g)\g\|_{C^1}
&\les \f\|g_n\g\|_{C^1}\f\|f_n,\,g_n\g\|_{C^1}\les 2^{-n},
\end{align*}
which implies that
\begin{align*}
\int_0^t\f\|\mathbf{P}\f(\mathbf{S}_{\tau}(u_0^n)\g)-\mathbf{P}\f(\mathbf{S}_{\tau}(f_n)\g)\g\|_{C^1}\dd\tau
&\les t^2+t2^{-n}.
\end{align*}
Inserting the above \eqref{yyy1y}-\eqref{yy3y} into \eqref{h0y}, we deduce  that
\bbal
\liminf_{n\rightarrow \infty}\|\mathbf{S}_t(f_n+g_n)-\mathbf{S}_t(f_n)\|_{C^1}\gtrsim t\quad\text{for} \ t \ \text{small enough}.
\end{align*}
This finishes the proof of Proposition \ref{prop}. Thus we complete the proof of Theorem \ref{th1}.
\end{proof}

\section{Appendix}
We would like to mention that, the above method in Section \ref{sec4} seems to be invalid for the periodic Camassa-Holm equation since the key fact $\partial_x^{-1}f(x)=\int_0^xf(y)\dd y$ with $f\in L^1(\R)$ have been used. In this section,  motivated by the idea of the traveling wave solutions \cite{Himonas2005}, we shall prove the non-uniform continuity of the data-to-solution map for the periodic Camassa-Holm equation in $C^{1}$.

If $u(x,t) =f(x-t)$ is to be a solution to the CH equation, the
function $f$ must satisfy the ordinary differential equation
\begin{align*}
-f'+f'''+3ff'-2f'f''-ff'''=0.
\end{align*}
Integrating this equation twice gives
\begin{align}  \label{2.3}
(1-f)(f')^2=-f^3+f^2+af+b,
\end{align}
where $a$ and $b$ are arbitrary constants. Since we are looking for a $2\pi$-periodic solution
whose minimum is m and maximum is $M$, we find
\begin{align*}
a=M^2+mM+m^2-M-m,\quad b=mM(1-m-M).
\end{align*}
Equation \eqref{2.3} takes now the form
\begin{align}   \label{2.5}
(1-f)(f')^2=(f-m)(M-f)\big(f-(1-m-M)\big).
\end{align}
We will assume that $0\ll m<M<1.$ More precisely, given suitably small $\delta>0$, we let
\begin{align*}
M=1-\delta,\quad m=1-(\delta+\delta^2).
\end{align*}
If we set
\begin{align}  \label{2.7}
y=1-f,
\end{align}
then \eqref{2.5} becomes
\begin{align}  \label{2.8}
(y')^2=\frac{(\delta+\delta^2-y)(y-\delta)(2-2\delta-\delta^2-y)}y.
\end{align}
In fact, it can be seen that the differential equation \eqref{2.8} admits a nonconstant solution of period 2$\ell$, for some $\ell>0$, which solves the following second-order initial value problem:
\begin{equation}  \label{2.9}
\begin{cases}
y^{\prime\prime}=y-1+\frac{\delta(\delta+\delta^2)(2-2\delta-\delta^2)}{2y^2},\\
y(0)=\delta,\quad y^{\prime}(0)=0.
\end{cases}
\end{equation}
The next lemma gives precise estimates of the solution $y$ and its period.
\begin{lemma}[\cite{Himonas2005}]\label{lmmm}
 For any $0<\delta<1/5$, there exist a positive number $\ell=\ell(\delta)$ and an even $2\ell$-periodic smooth function $y = y(x)$ which solves \eqref{2.9}.
Moreover, the function $y$ satisfies
\begin{align}  \label{2.11}
\delta\leq y(x)\leq\delta+\delta^2
\end{align}
and
\begin{align}  \label{2.1-}
|y'(x)|\leq \sqrt{2}\delta^{\fr32}.
\end{align}
The period $2\ell$ of the solution $y$ depends continuously on the parameter $\delta$ and satisfies
\begin{align}  \label{2.12}
\frac{\sqrt{2}}6\sqrt{\delta+\delta^2}\leq\ell\leq4\sqrt{\delta+\delta^2}.
\end{align}
and the function $u(x,t)=f(x-t)$, where $f(x) = 1 - y(x)$ is a traveling wave solution of the Camassa-Holm equation.
\end{lemma}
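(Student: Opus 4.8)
The plan is to treat \eqref{2.9} as an autonomous second-order ODE and to extract from it the conserved ``energy'' recorded in \eqref{2.8}. Writing $c:=\delta(\delta+\delta^2)(2-2\delta-\delta^2)$, I would multiply \eqref{2.9} by $y'$ and integrate once over $[0,x]$; since the right-hand side has antiderivative $\frac12 y^2-y-\frac{c}{2y}$, this gives $\frac12(y')^2=\frac12 y^2-y-\frac{c}{2y}+C_0$. Evaluating at $x=0$ with $y(0)=\delta$ and $y'(0)=0$ fixes $C_0$, and multiplying through by $y$ yields a monic cubic whose roots are exactly $\delta$, $\delta+\delta^2$ and $2-2\delta-\delta^2$ (their sum is $2$ and their product is $c$). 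Hence $(y')^2=G(y)$ with $G(y):=\frac{(\delta+\delta^2-y)(y-\delta)(2-2\delta-\delta^2-y)}{y}$, which recovers \eqref{2.8}; this first-order relation is the object I would analyse throughout.

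Next I would run a phase-plane argument. For $0<\delta<1/5$ the three roots are ordered $\delta<\delta+\delta^2<2-2\delta-\delta^2$ (the last inequality is $3\delta+2\delta^2<2$), so $G\ge 0$ precisely on $[\delta,\delta+\delta^2]$ and $G>0$ in its interior. Local existence and smoothness of $y$ follow from standard ODE theory applied to \eqref{2.9}, whose right-hand side is smooth wherever $y>0$. Since the field is autonomous and $y'(0)=0$, comparing $y(x)$ with $y(-x)$ and invoking uniqueness forces $y$ to be even; the solution then increases monotonically from the turning point $y=\delta$ at $x=0$ to the next turning point $y=\delta+\delta^2$ at some $x=\ell$, and reflecting about $x=\ell$ produces an even, $2\ell$-periodic solution trapped in $[\delta,\delta+\delta^2]$. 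This is exactly \eqref{2.11}. For the gradient bound \eqref{2.1-} I would simply insert \eqref{2.11} into $(y')^2=G(y)$: on $[\delta,\delta+\delta^2]$ one has $\delta+\delta^2-y\le\delta^2$, $y-\delta\le\delta^2$, $2-2\delta-\delta^2-y\le2$ and $1/y\le1/\delta$, whence $(y')^2\le 2\delta^3$ and $|y'|\le\sqrt2\,\delta^{3/2}$.

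The period estimate \eqref{2.12} is the step I expect to be the main obstacle, since it requires two-sided control of the improper integral
\[
\ell=\int_\delta^{\delta+\delta^2}\frac{\dd y}{\sqrt{G(y)}}
\]
with the correct explicit constants. The guiding identity is $\int_\delta^{\delta+\delta^2}\frac{\dd y}{\sqrt{(\delta+\delta^2-y)(y-\delta)}}=\pi$, which tames the square-root singularities at both endpoints. For the upper bound I would bound $G$ from below using $1/y\ge 1/(\delta+\delta^2)$ and $2-2\delta-\delta^2-y\ge 2-3\delta-2\delta^2\ge 1$ (valid for $\delta<1/5$), reducing to the above integral and giving $\ell\le\pi\sqrt{\delta+\delta^2}\le4\sqrt{\delta+\delta^2}$. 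For the lower bound I would bound $G$ from above using $1/y\le1/\delta$ and $2-2\delta-\delta^2-y\le2$, producing $\ell\ge\frac{\pi}{\sqrt2}\sqrt{\delta}\ge\frac{\sqrt2}{6}\sqrt{\delta+\delta^2}$.

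It then remains to record the continuity of $\ell$ in $\delta$, which follows from the integral representation together with the continuous dependence of solutions of \eqref{2.9} on the parameter $\delta$. Finally, reversing the two integrations that led from the third-order traveling-wave ODE to \eqref{2.3} and then to \eqref{2.5}, and using the matching of the constants $a,b$ with $M=1-\delta$, $m=1-(\delta+\delta^2)$, confirms that $f=1-y$ satisfies the traveling-wave equation, so that $u(x,t)=f(x-t)$ is a $2\ell$-periodic traveling-wave solution of the Camassa--Holm equation. Assembling \eqref{2.11}, \eqref{2.1-} and \eqref{2.12} completes the proof.
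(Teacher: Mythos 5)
Your proposal is correct, but be aware that the paper offers no proof of this lemma at all: it is quoted, with label [\cite{Himonas2005}], from Himonas--Misio{\l}ek, and the paper's appendix only supplies the forward reduction \eqref{2.3}--\eqref{2.9} from the traveling-wave ansatz as context. So what you have written is a self-contained reconstruction of the cited result rather than a parallel to an argument in this paper, and it is the natural (and presumably the standard) one: the first integral $(y')^2=G(y)$ obtained by multiplying \eqref{2.9} by $y'$ and fixing the constant from $y(0)=\delta$, $y'(0)=0$; trapping of the orbit between the two simple roots $\delta$ and $\delta+\delta^2$, with evenness from uniqueness under $x\mapsto-x$; \eqref{2.11} and \eqref{2.1-} read off from the factorized $G$; and the two-sided bound on the half-period $\ell=\int_\delta^{\delta+\delta^2}G(y)^{-1/2}\,\dd y$ via the identity $\int_a^b\big((b-y)(y-a)\big)^{-1/2}\dd y=\pi$, whose outputs $\ell\le\pi\sqrt{\delta+\delta^2}$ and $\ell\ge(\pi/\sqrt2)\sqrt{\delta}$ do verify \eqref{2.12} with room to spare for $\delta<1/5$; the concluding reversal of the two integrations legitimately recovers the third-order traveling-wave ODE, since differentiating \eqref{2.3} is unconditional and the factor $f'$ one divides by vanishes only at isolated points. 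Two cosmetic slips, neither affecting correctness: $G\ge0$ holds not ``precisely'' on $[\delta,\delta+\delta^2]$ but also on $[2-2\delta-\delta^2,\infty)$, which is harmless because the orbit through $(\delta,0)$ stays in the bounded component; and pinning down the roots of the monic cubic requires matching all three elementary symmetric functions, not just the sum and product you invoke (the middle one does match, since $r_1r_2+r_1r_3=2\delta-\delta^2$ and $2C_0=2\delta-\delta^2+r_2r_3$, so the claim is true but under-justified as stated). What your write-up buys over the paper's bare citation is self-containedness: it certifies the explicit constants in \eqref{2.1-} and \eqref{2.12} and the continuity of $\ell(\delta)$, which are precisely the facts the appendix later uses to choose $\ell=\pi/n$ and to extract the lower bound of order $\delta^2 t$.
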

We notice  the fact that $\ell$ is continuous with respect to the parameters $\delta$ assures that for any sufficiently large integer $n$, we can always find an $0<\delta<1/5$ such that
\begin{align}  \label{2.17}
\ell=\frac\pi n.
\end{align}
Let $f_n$ be the $2\pi/n$ periodic smooth solution constructed in Lemma \ref{lmmm}.
Define two sequences of travelling wave solutions
\begin{align}  \label{4.2}
u^1_n(x,t)=f_n(x-t)\quad\text{and}\quad\quad u^2_n(x,t)=c_nf_n\left(x-c_nt\right)\quad\text{with}\quad c_n=1+\frac{1}{n}.
\end{align}
Firstly, we need to show that the solution $(u^1_n,u^2_n)$ is uniformly bounded in $C^{1}(\mathbb{T})$. In fact,
\begin{align*}
\|u^1_n(x,t),\,u^2_n(x,t)\|_{C^{1}(\mathbb{T})}\leq 3\left(1+\|y_n\|_{C^{1}(\mathbb{T})}\right)\les1.
\end{align*}
Next by the boundedness of the above results, we have at $t=0$
\begin{align*}
\|u^1_n(x,0)-u^2_n(x,0)\|_{C^{1}(\mathbb{T})}=\frac{1}{n}\left\|1-y_n\right\|_{C^{1}(\mathbb{T})}\to0,\quad n\to\infty.
\end{align*}
Let $t\in(0,T].$ It is found from $\ell=\frac\pi n$ in \eqref{2.17} and \eqref{2.12} that
\begin{align*}
\frac tn=\frac{t\ell}{\pi}\geq\frac{\delta t}{3\pi}.
\end{align*}
Applying the above estimate yields
\begin{align*}
y_n'\left(\dfrac{t}{n}\right)&=y_n'(0)+y_n''(0)\dfrac{t}{n}+o\left(\dfrac{t}{n}\right)\\
&\geq\dfrac{\delta(2-3\delta-\delta^2)}{2}\cdot\dfrac{t}{n}\\
&\geq\dfrac{\delta^2(2-3\delta-\delta^2)) t}{6\pi},
\end{align*}
where we have used the facts from \eqref{2.9}
$$
y_n'(0)=0\quad \text{and}\quad y_n''(0)=\dfrac{\delta(2-3\delta-\delta^2)}{2}.
$$
Taking value at $x=c_nt=(1+1/n)t$ and recalling $y_n^\prime(0)=0$, it follows that
\begin{align*}
\left\|u^1_n(t)-u^2_n(t)\right\|_{C^1(\mathbb{T})}&\geq\left\|\partial_x(u^1_n(t)-u^2_n(t))\right\|_{L^\infty(\mathbb{T})}\\
&=\left\|y'(x-t)-c_ny'(x-c_nt)\right\|_{L^\infty(\mathbb{T})}\\
&\geq\left|y_n^{\prime}\left(\frac{t}{n}\right)\right|\\
&\geq \dfrac{\delta^2(2-3\delta-\delta^2)) t}{6\pi}.
\end{align*}
Combing the above, we obtain the non-uniform continuity of the data-to-solution map for the Camassa-Holm equation on the circle.
\section*{Acknowledgments}
Y. Yu is supported by National Natural Science Foundation of China (12101011).

\section*{Declarations}
\noindent\textbf{Data Availability} No data was used for the research described in the article.

\noindent\textbf{Conflict of interest}
The authors declare that they have no conflict of interest.

\end{document}